\theoremstyle{plain}
\newtheorem{theorem}{Theorem}[section]
\newtheorem{definition}{Definition}[section]
\newtheorem{lemma}[definition]{Lemma}
\newtheorem{corollary}[definition]{Corollary}
\newtheorem{proposition}[definition]{Proposition}
\newtheorem{remark}[definition]{Remarks}
\def \H {{\bf H}}
\def \Z {{\bf Z}}
\def \B {{\bf B}}
\def \M {{\mathcal M}}
\def \P {{\mathcal P}}
\def \X {{\bf X}}
\def \z {{\bf z}}
\def \bw {{\bf w}}
\def \bu {{\bf u}}
\def \b0 {{\bf 0}}
\def \im {\mbox {Im}}
\def \re {\mbox {Re}}
\def \sgn {\mbox {sgn}}
\def \x {{\bf x}}
\def \y {{\bf y}}
\def \bz {{\bf z}}
\def \bw {{\bf w}}
\def \R  {{\mathbb R}}
\def \C  {{\mathbb C}}
\def\half{\frac{1}{2}}
\begin{document}

\begin{frontmatter}
\title[Cauchy--Szeg\"{o} projection on Model Domains]{The Kohn--Laplacian and Cauchy--Szeg\"{o} projection \\ on Model Domains }

\begin{aug}
    \author{\fnms{Der-Chen} \snm{Chang}\thanksref{t2}\ead[label=e1]{chang@georgetown.edu}
    \ead[label=u1,url]{http://www.foo.com}}
    \address{Department of Mathematics and Statistics, Georgetown University,\\
Washington D. C. 20057, USA. \\
and \\
Graduate Institute of Business Administration, \\
College of Management, Fu Jen Catholic University, \\
Taipei 242, Taiwan.
             \printead{e1}}            
 \author{\fnms{Ji} \snm{Li,}\thanksref{t3}\ead[label=e2]{ji.li@mq.edu.au}
    \ead[label=u1,url]{http://www.foo.com}}
    \address{School of Mathematical and Physical Sciences, Macquarie University,  \\
    NSW, 2109, Australia.
             \printead{e2}}             
   \author{\fnms{Jingzhi} \snm{Tie}\ead[label=e3]{jtie@math.uga.edu}
    \ead[label=u1,url]{http://www.foo.com}}
    \address{Department of Mathematics,
University of Georgia, \\
Athens, GA 30602, USA. 
             \printead{e3}}
\and
\author{\fnms{Qingyan} \snm{Wu}\thanksref{t4}
\ead[label=e4]{wuqingyan@lyu.edu.cn}
    \ead[label=u1,url]{http://www.foo.com}}
    \address{Department of Mathematics, Linyi University\\
 Shandong, 276005, China\\
             \printead{e4} \\
             \printead{u1}}
        \thankstext{t2}{Der-Chen Chang is partially supported by an NSF grant DMS-1408839 and a McDevitt Endowment Fund at Georgetown University. }
         \thankstext{t3}{Ji Li is partially supported by the Australian Research Council (ARC) through the research grant DP220100285.}
          \thankstext{t4}{Qingyan Wu is supported by the National Science Foundation of China (grant nos. 12171221 and 12071197), the Natural Science
Foundation of Shandong Province (grant nos. ZR2021MA031 and 2020KJI002).}
\end{aug}

\begin{abstract}

We study the Kohn--Laplacian and its fundamental solution on some model domains in $\mathbb C^{n+1}$,
and further discuss the explicit kernel of the Cauchy--Szeg\"{o} projections on these model domains using the real analysis method. We further 
show that these Cauchy--Szeg\"{o} kernels are Calder\'on--Zygmund kernels under the suitable quasi-metric. 
\end{abstract}

\begin{keyword}[class=AMS]
\kwd[Primary ] {32V05, 32V20}
\kwd[; Secondary ]{53C56}
\end{keyword}

\begin{keyword}
\kwd{CR manifolds, Kohn-Laplacian, Cauchy--Szeg\"{o} projection, Heisenberg group} 
\end{keyword}

\end{frontmatter}

\maketitle

\section{\bf Background and main results}

In complex analysis,  fundamental objects such as fundamental solutions for the Kohn--Laplacian,
Cauchy--Szeg\"{o} kernel, heat kernel, etc. are explicitly known in very few cases. But explicit solutions are very important for related analysis, especially for unbounded domains. 
In this paper, we discuss such formulae for some higher step case in higher dimension using a different approach.
\color{black}
We first review the geometry of a general real hypersurface $\M$ in $\C^{n+1}$, 
{which is unbounded and of high step,} 
and study geometrically invariant formulas for the fundamental solutions of 
the Kohn--Laplacian, Cauchy--Szeg\"{o} {kernel} and heat kernels. 
We refer to Theorems \ref{thm:fund-100}  and \ref{thm CS kernel}. 
This model domain was studied intensively by many mathematicians, especially Beals, Gaveau and Grenier
(see {\it e.g.,} Beals \cite {B}, Beals--Gaveau--Greiner \cite{BGGr1, BGGr2, BGGr3, BGGr4} 
and Calin--Chang--Greiner \cite{CCGr}).

The main result in the current paper is the estimates for those kernels 
by applying real method in harmonic analysis. 
The first step is to establish the $L^2$ estimates. For the Cauchy--Szeg\"o projection ${\bf S}$, 
this is automatic since by definition ${\bf S}:L^2(\partial \Omega_k)\to H^2({\Omega_k})$ is bounded. 
Then the next natural operator is $P(X_1,X_2)K_\lambda$. 
$K_\lambda$ is the fundamental solution for the Kohn Laplacian 
which we derived in our Sections 4 and 5, 
and $P(X_1,X_2)$ is a quadratic polynomial in the ``horizontal" vector fields $X_1$ and $X_2$. 
Then we can use David--Journ\'e theorem (The famous $T(1)$ Theorem) (see David--Journ\'e \cite {DJ} 
and Nagel--Rosay--Stein--Wainger \cite{NRSW}). Due to the limitation of pages, we will put detailed calculations in a forthcoming paper. 

The natural next step is to investigate whether the Cauchy--Szeg\"{o} {kernels} 
on the boundary of model domains are Calder\'on--Zygmund kernels. Note that  Diaz \cite{Di} studied this property for the Cauchy--Szeg\"{o} {kernels}  of Greiner and Stein \cite{GrS1} in domains in $\mathbb C^2$.
In this paper, by choosing a suitable control metric, we provide another proof to show that that for the model domains 
$\Omega_k$ in $\mathbb C^2$ with $k\geq1$, the 
Cauchy--Szeg\"{o} Kernels on the boundary are Calder\'on--Zygmund kernels. We refer to 
Theorem \ref{thm 7.1} for detail. Our approach can be applied to model domains 
$\Omega_k$ in $\mathbb C^{n+1}$ for general $n>1$ and $k\geq1$, with full detailed calculations in a forthcoming paper. 

Due to the notational complexity, we will not state the full details of our main theorems here and
refer the details to each section. This paper is organized as follows:
\begin{itemize}
\item In Sections 2 and 3, we review the Cauchy-Riemann geometry and subRiemannian geometry in $\mathbb C^n$, and the Kohn--Laplacian on CR-manifolds in $\C^{n+1}$, respectively;
\item In Section 4, we study the fundamental solution for Kohn--Laplacian on Siegel upper half space in $\mathbb C^{n+1}$;
\item In Section 5, we derive the fundamental solution for Kohn--Laplacian on model domains (with higher steps) in $\mathbb C^{n+1}$ and prove our first main result Theorem \ref{thm:fund-100};
\item In Section 6, we apply the result in Section 5, and obtain the explicit Cauchy--Szeg\"{o} {kernels} on the boundary of model domains, which is the second main result Theorem \ref{thm CS kernel};
\item In the last section, we prove that the 
Cauchy--Szeg\"{o} {kernels} on the boundary are Calder\'on--Zygmund kernels which is the third main result Theorem \ref{thm 7.1}.
\end{itemize}

\section{\bf Cauchy-Riemann geometry and subRiemannian geometry}

Consider
\[
\Delta_\X=\frac 1{2}\sum_{j=1}^m X_j^2+\cdots,
\]
where $\X=\{X_1,\,\dots,\,X_m\}$ are $m$ linearly independent vector fields on ${\mathcal M}_n$, an $n$-dimensional real
manifold with $m\le n$. The subspace $T_X$ spanned by $X_1,\,\dots,\,X_m$ is called the horizontal subspace, and its
complement is referred to as the missing directions.

$T_\X=T{\mathcal M}$ if and only if $\Delta_\X$ is elliptic. The operator $\Delta_\X$ is the usual Laplace--Beltrami operator.
The Newtonian potential is
\[
N(\x,\x_0)=\frac{1}{(2-n)|\Sigma_n(\x_0)|d^{n-2}(\x,\x_0)},\quad
n>2,
\]
where $|\Sigma_n(\x_0)|$ is the surface area of the induced unit
ball with center $\x_0$, and $d(\x,\x_0)$ is the Riemannian distance between $\x$ and $\x_0$. Then
\[
\Delta_XN(\x,\x_0)=\delta(\x-\x_0)+{\mathcal O}\big(d^{-n+1}(\x,\x_0)\big).
\]
\medskip

When $T_\X\ne T{\mathcal M}$, the operator is non-elliptic. Assume $\X$ satisfies {\it bracket generating condition}: ``the horizontal vector fields $\X$ and
their brackets span $T{\mathcal M}$", then
\smallskip

\noindent $(1)$. We know that from {\it Chow's Theorem} \cite{CHOW}: Given any two points $A,B\in {\mathcal M}$, there is a piecewise $C^1$ 
{\it horizontal curve} $\gamma:[0,1]\rightarrow {\mathcal M}$: 
\[ \gamma(0)=A,\quad \gamma(1)=B,
\] 
and 
\[
\dot\gamma(s)=\sum_{k=1}^m a_k(s)X_k. 
\] 
This yields a distance
and therefore a geometry which we shall call {\it subRiemannian}.
\smallskip

\noindent  $(2)$. By results of Fefferman--Phong \cite{FP1} and Fefferman--Sanchez \cite{FS1}, we know that $\Delta_\X$ is {\it subelliptic}:
\[
\big\|{\mathcal P}(X_j,X_k)u\big\|_{L^2_k}\le C\|f\|_{L^2_k},\quad
k\in \mathbb Z_+
\]
where ${\mathcal P}(X_j,X_k)$ is any quadratic polynomial in
$X_j$, $X_k$, $1\le j,k\le m$. Hence, it is hypoelliptic, {\it
i.e.,}
\[
\Delta_X u=f,\,\,\,f\in C^\infty(\M_n)\,\,\Rightarrow\,\, u\in
C^\infty(\M_n).
\]
This recovered a theorem of H\"ormander \cite{H}. 
\medskip

Set
\[
X_j=\sum_{k=1}^n a_{jk}(x)\frac{\partial}{\partial x_k},\quad
j=1,\dots, m.
\]
Then
\[
H=\frac{1}{2}\sum_{j=1}^m\Big(\sum_{k=1}^n a_{jk}(x)\xi_k\Big)^2
\]
is the Hamiltonian function on the cotangent bundle $T^*{\mathcal M}$.

A {\it bicharacteristic curve} $(\x(s),\xi(s))\in
T^*{\mathcal M}$ is a solution of the Hamilton's system:
\[
\dot x_j(s)=H_{\xi_j},\qquad \dot\xi_j(s)=-H_{x_j},
\]
with boundary conditions,
\[
x_j(0)=x_j^{(0)},\quad x_j(\tau)=x_j,\quad j=1,\dots, n,
\]
for given points $\x^{(0)},\,\x\in {\mathcal M}$.

The projection $\x(s)$ of the bicharacteristic curve on ${\mathcal
M}_n$ is a {\it geodesic}.
\medskip

\begin{remark}

This new geometry has essential differences with the Riemannian geometry. 

\noindent $(1)$ Every point $O$ of a Riemannian manifold is connected to
every other point in a sufficiently small neighborhood by a unique
geodesic. On a subRiemannian manifold there will be points
arbitrarily near $O$ which are connected to $O$ by an infinite
number of geodesics. This strange {phenomenon} was first pointed out by Gaveau (1977) and Strichartz (1986), and it brings up the question
of what ``local" means in subRiemannian geometry. Control theorists studying subRiemannian examples noticed that the Riemannian concepts of cut locus and conjugate locus behave badly in a subRiemannian context. 

\noindent  $(2)$ In Riemannian geometry the unit ball is smooth. In
subRiemannian geometry, among many distances, there is a
shortest one, often referred to as the Carnot-Carath\'eodory
distance. In subRiemannian geometry the Carnot-Carath\'eodory unit ball is singular.

\noindent  $(3)$ The exponential map is smooth in Riemannian geometry, but
often singular in subRiemannian geometry. The singularities occur
at points connected to an ``{\it origin}" by an infinite number of
geodesics. These singular points constitute a submanifold whose tangents yield the ``{\it missing directions}", that is the
directions in $T{\mathcal M}_n$ not covered by the horizontal directions.
\end{remark}
\smallskip

Suppose that we are in $3$ dimensions, $\x=(x_1,x_2,t)=(x^\prime,t)$, with
$2$ vector fields,
\[
X_1=\frac{\partial}{\partial
x_1}+2kx_2|x^\prime|^{2k-2}\frac{\partial}{\partial t},
\]
\[
X_2=\frac{\partial}{\partial
x_2}-2kx_1|x^\prime|^{2k-2}\frac{\partial}{\partial t},
\]
with $|x^\prime|^2=x_1^2+x_2^2$. The differential operator one
wants to invert is
\[
\Delta_\lambda=\frac{1}{2}\big(X_1^2+X_2^2\big)-\frac
{1}{2}i\lambda \big[X_1,X_2\big].
\]
The number given by the minimum number of brackets necessary to generate $T{\mathcal M}$ plus $1$ is referred to as the ``{\it step}" of the operator 
$\Delta_\X$. In particular, elliptic operator is {\it step $1$}, one bracket generators are  {\it step $2$}, and 
everything else is referred to as higher step.  Since
\[
[X_1,X_2]=-2k(k+1)|x^\prime|^{2(k-1)}\frac{\partial}{\partial t},
\]
$\Delta_\lambda$ is step $2$ at points $|x^\prime|\not=0$, and step $2k$ otherwise.

We first try to find the fundamental solution $K_\lambda(\x,\x_0)$ of $\Delta_\lambda$ which is the distribution solution of 
\[
\Delta_{\lambda,\x}K_\lambda(\x,\x_0)\,=\, \delta\big(\x-\x_0\big).
\]

Before we go further discussion of the operator $\Delta_\lambda$, let us review the geometry of a general real hypersurface $\M$ in $\C^n$. 
The beautiful interplay between real and complex geometry dominates the discussion.

Let us first work with $\C^n$ itself. A vector field $L$ on $\C^n$ can be expressed as a first order different
operator
\[
L=\sum_{j=1}^{2n} a_j(\x)\frac{\partial}{\partial x_j},
\]
where $a_j\in C^\infty$. In order to allow us to express the differentiations in complex notation, we start
by considering the complexified tangent bundle $\C T(\C^n)=\C\otimes T(\C^n)$. A section of this bundle is a complex
vector field:
\[
L=\sum_{j=1}^n c_j(\x)\frac{\partial}{\partial z_j}+\sum_{j=1}^n d_j(\x)\frac{\partial}{\partial \bar z_j}.
\]
We obtain two naturally defined integrable subbundles of $\C T(\C^n)$:
\[
T^{(1,0)}(\C^n)=\Big\{Z=\sum_{j=1}^n c_j(\x)\frac{\partial}{\partial z_j}\Big\}
\]
where $c_j\in C^\infty(\C^n)$. $T^{(1,0)}(\C^n)$ is integrable in the sense of
Frobenius: if $Z,W\in T^{(1,0)}(\C^n)$ $\Rightarrow$ $[Z,W]\in T^{(1,0)}(\C^n)$.
Denoted
\[
T^{(0,1)}(\C^n)=\overline {T^{(1,0)}(\C^n)}.
\]
Hence,
\[
T^{(1,0)}(\C^n)\cap T^{(0,1)}(\C^n)=\{0\}.
\]
This splitting of the tangent bundle plays a crucial rule in all aspects of complex geometry.

Let $\M$ be a smooth real hypersurface of $\C^n$, or more generally, of a complex manifold.
Again we start by tensoring with $\C$, writing $\C T\M$ for $\C\otimes T(\M)$.
We define
\[
T^{(1,0)}(\M)=T^{(1,0)}(\C^n)\cap \C T\M.
\]
As before, $T^{(0,1)}(\M)=\overline {T^{(1,0)}(\M)}$. Again we have
\[
T^{(1,0)}(\M)\cap T^{(0,1)}(\M)=\{0\}.
\]
For an abstract real manifold $\M$ we say that the subbundle $T^{(1,0)}(\M)$ defines a {\it CR structure on $\M$} if it is integrable, and its intersection with its conjugate bundle is trivial.
We call such a manifold a CR manifold. Its {\it horizontal subbundle} is the union
\[
\mathcal H(\M)=T^{(1,0)}(\M)\cup T^{(0,1)}(\M).
\]
We say that $\M$ is of {\it hypersurface type} if the fibres of $\mathcal H(\M)$ have codimension $1$ in $\C T\M$.

Given a CR manifold of hypersurface type, there is a non-vanishing differential 1-form $\eta$ such that
\[
{\mbox{ker}}(\eta)=\mathcal H(\M).
\]
We may assume that $\eta$ is purely imaginary.
\smallskip

\begin{definition}
Let $\M$ be a CR manifold of hypersurface type. The {\it Levi form} $\lambda$ is the Hermitian form defined by
\begin{equation*}
\lambda(Z,\bar W)=\big\langle \eta,[Z,\bar W]\big\rangle,\quad Z,W\in T^{(1,0)}(\M).
\end{equation*}
\end{definition}
\begin{definition}
A CR manifold of hypersurface type is {\it pseudoconvex} if all nonzero eigenvalues of $\lambda$ have the same sign.
It is called {\it strongly pseudoconvex} if $\lambda$ is definite, that is, all eigenvalues have the same non-zero sign.
\end{definition}
\smallskip

Now we want to express the Levi form on a hypersurface in terms of partial derivatives. In a neighborhood of a given point, we suppose that
\[
\M=\big\{z\in \C^n:\,\,\rho(z)=0,\,\,d\rho\ne 0\big\}.
\]
A complex vector field $Z$ is tangent to $\M$ then $Z(\rho)=0$ on $\M$. We may use
\[
\eta=\big(\partial-\bar\partial\big)(\rho).
\]
Let $Z,W\in T^{{(1,0)}}(\M)$. Then
\[
\langle \partial \rho,Z\rangle=\langle \partial \rho,W\rangle=0.
\]
By the Cartan formula for exterior derivatives,
\begin{equation*}
\begin{split}
\lambda(Z,\bar W)=&\big\langle \eta,[Z,\bar W]\big\rangle\\
=&\big\langle -d\eta,Z\wedge \bar W\big\rangle
=\big\langle \partial\bar\partial\rho,Z\wedge \bar W\big\rangle.
\end{split}
\end{equation*}
Hence we have interpreted the Levi form as the restriction of the complex Hessian of $\rho$ to sections of $T^{(1,0)}(\M)$.

Here we give a few examples.
\smallskip 

\noindent {\bf Example 1.} The zero set of
\[
\rho(z)=\sum_{j=1}^{n+1} |z_j|^2-1
\]
is the sphere $\mathbb{S}^{2n+1}$.
The horizontal subbundle $\mathcal {H}_z$ on $\mathbb{S}^{2n+1}$ decomposes into the holomorphic subspace
\[
T_z^{(1,0)}(\mathbb{S}^{2n+1})=\,{\text{span}}\{Z_1,\ldots Z_{n}\}
\]
and its conjugate
$T_z^{(0,1)}(\mathbb{S}^{2n+1})$,
where
\[
Z_j=\bar z_j\frac{\partial}{\partial z_{n+1}}-\bar z_{n+1}\frac{\partial}{\partial z_j},\quad j=1,\ldots,n.
\]
The annihilating contact 1-form is
\[
\eta=\frac{1}{2}\sum_{j=1}^{n+1} \big(\bar z_jdz_j-z_jd\bar z_j\big).
\]
The Levi form, after dividing out a nonzero factor, satisfies $\lambda(Z_j,\bar Z_k)=\delta_{jk}+\bar z_j z_k$.
Hence $\mathbb{S}^{2n+1}$ is strongly pseudoconvex.
\smallskip

\noindent {\bf Example 2.} The zero set of
\[
\rho(z)=\im(z_{n+1})-\sum_{j=1}^{n}|z_j|^2
\]
is the boundary of the Siegel upper half space, which we identify with the Heisenberg group ${\bf H}_n$. To simplify notations, we use ${\bf H}_n$ to represent $\partial\widetilde \Omega_n$ from now on. 
Since $\B_{n+1} \thickapprox \widetilde \Omega_n$ biholomorphically equivalent, the CR structures on the boundary and associated Levi forms
must be equivalent. The horizontal subbundle of ${\bf H}_n$ decomposes into holomorphic and conjugate holomorphic subspaces $T^{(1,0)}_z({\bf H}_n)={\mbox{span}}\{Z_1,\ldots, Z_{n}\}$ and $T^{{(1,0)}}_z({\bf H}_n)$ with
\[
Z_j=\frac{\partial}{\partial z_j}-2i\bar z_j\frac{\partial}{\partial z_{n+1}},\quad j=1,\ldots,n
\]
and later is spanned by the conjugate vector fields $\bar Z_1,\ldots, \bar Z_{n}$.
An annihilating contact 1-form is
\[
\eta=\frac{i}{2}\big(dz_{n+1}+d\bar z_{n+1}\big)+\sum_{j=2}^{n+1} \big(\bar z_jdz_j-z_jd\bar z_j\big).
\]
Finally, the Levi form is $\lambda(Z_j,\bar Z_k)=\delta_{jk}$.
\smallskip

\noindent {\bf Example 3.} The zero set of
\[
\rho(z)=\im(z_{n+1})
\]
is a halfspace $\Sigma$. It's horizontal space $\mathcal {H}_z(\Sigma)$ decomposes
into holomorphic and conjugate holomorphic subspaces, spanned respectively by the vector fields
\[
Z_j=\frac{\partial}{\partial z_j},\quad j=1,\ldots, n
\]
and their conjugates $\bar Z_1,\ldots, \bar Z_{n}$. In this case, the complex Hessian of the defining
function is identically equal to zero, and hence the Levi form is $\lambda(Z_j,\bar Z_k)\equiv 0$.
\medskip

\section {\bf Kohn--Laplacian} Let $\M$ be a CR-manifold in $\C^{n+1}$. 
Assume that $\{Z_1,\ldots,Z_{n}\}$
is an orthonormal basis of $T^{(0,1)}(\M)$.
Denote $\mathcal {B}^{(0,q)}(\M)$ the set of all $(0,q)$-forms on $\M$.
An element $\phi\in\mathcal {B}^{(0,q)}(\M)$ can be written as
\[
\phi=\sum_{J\in\vartheta_q}\phi_J\bar\omega^J=\sum_{J\in\vartheta_q}\phi_J\bar\omega_{j_1}\wedge
\bar\omega_{j_2}\wedge\cdots\wedge \bar\omega_{j_q}
\]
where $\vartheta_q$ is the set of all increasing $q$-tuples $J=(j_1,\ldots,j_q)$ with
$j_1<j_2<\cdots<j_q$ and $\phi_J\in\C^\infty(\M)$.
The tangential Cauchy-Riemann operator
\[
\bar\partial_b:\mathcal {B}^{(0,q)}(\M)\,\rightarrow\,\mathcal {B}^{(0,q+1)}(\M), \,\,\, q=0,\ldots,n
\]
can be written as
\[
\bar\partial_b\phi=\sum_{k=1}^{n}\sum_{J\in\vartheta_q}\bar Z_k(\phi_J)\bar\omega_k\wedge \bar\omega^J\,\in\,
\mathcal {B}^{(0,q+1)}(\M).
\]
The adjoint operator
\[
\bar\partial_b^\ast:\mathcal {B}^{(0,q)}(\M)\,\rightarrow\,\mathcal {B}^{(0,q-1)}(\M), \,\,\, q=1,\ldots,n-1
\]
can be written as
\[
\bar\partial_b^\ast \phi=-\sum_{k=1}^{n}\sum_{J\in\vartheta_q}Z_k(\phi_J)\bar\omega_k\lrcorner \bar\omega^J\,\in\,
\mathcal {B}^{(0,q-1)}(\M)
\]
where
\[
\bar\omega_k\lrcorner \bar\omega^J=0,\qquad {\mbox{if $k\not\in \vartheta_q$}}
\]
and
\[
\bar\omega_k\lrcorner \bar\omega^J=
(-1)^m\bar\omega_{j_1}\wedge\cdots\wedge \bar\omega_{j_{m-1}}\wedge \bar\omega_{j_{m+1}}\wedge\cdots\wedge \bar\omega_{j_q}
\]
if $k=j_m$. The Kohn Laplacian on $(0,q)$-forms is
\[
\square_b=\bar\partial_b\bar\partial_b^\ast+\bar\partial_b^\ast\bar\partial_b
\]
which is a self-adjoint operator defined $\mathcal {B}^{(0,q)}(\M)$.
Straightforward computation shows that the action of $\square_b$ on $\mathcal {B}^{(0,q)}(\M)$ is given by
\[
\square_b\Big(\sum_{J\in\vartheta_q}\phi_J\bar\omega^J\Big)=-\sum_{J\in\vartheta_q}\big({\mathcal L}_{n-1-2q}\phi_J\big)
\bar\omega^J
\]
where, for $\lambda\in\C$,
\[
{\mathcal L}_\lambda=\frac{1}{2}\sum_{k=1}^{n}\big(Z_k\bar Z_k+\bar Z_kZ_k\big)-i\lambda [Z_k,\bar Z_k].
\]

Now let $\Omega_k=\{(z_1,z_2)\in\C^2:\, \im(z_2)>|z_1|^{2k}\}$ be a domain in $\C^2$. The boundary $\partial\Omega_k$
is a CR manifold of hypersurface type. As before, we may change the coordinates
\[
z_1=x_1+ix_2\quad{\mbox{and}}\quad t=\re(z_2),
\]
then the operator
$\Delta_\lambda$ is exactly the Kohn Laplacian with $1-2q$, $q=0,1,2$.
\smallskip

In fact, the Kohn Laplacian $\Delta_\lambda$ has some connection with the classical mechanics. 
Consider a unit mass particle under the influence of force $F(x) = x$. Newton's law $\ddot x = x$ gives us an equation which describes the dynamics of an inverse pendulum in an unstable equilibrium, for small angle $x$.
The potential energy
\[
U(x) = - \int_0^x F(u) \, du = -\frac{x^2}{2}.
\]
The Lagrangian $L : T \R\rightarrow \R$ is the difference between
the kinetic and the potential energy
\[
L(x, \dot x ) = K - U = \frac{1}{2}\dot x^2 + \frac{1}{2} x^2 .
\]
The momentum $p = \frac{\partial L}{\partial \dot x} = \dot x$ and
the Hamiltonian associated with the above Lagrangian is obtained
using the Legendre transform: $H : T^* \R \rightarrow \R$
\[
H(x, p)= p \dot x - L(x, \dot x) = \frac{1}{2} p^2 - \frac{1}{2}
x^2.
\]
Consider the following complexification
\[
x= x_1 + i p_2, \quad p = p_1 + i x_2.
\]
Hence $H : T^* \C \rightarrow \C$ and
\begin{equation*}
\begin{split}
H(x,p) &=\frac{1}{2}p^2   - \frac{1}{2} x^2\\
&= \frac{1}{2} (p_1 + i x_2)^2 - \frac{1}{2} (x_1 + i p_2)^2\\
&=\frac{1}{2} (p_1 + i x_2)^2 + \frac{1}{2} (p_2 -ix_1)^2.
\end{split}
\end{equation*}
Replacing $\theta= i$,
\[
H(x, p; \theta) = \frac{1}{2}(p_1 +\theta x_2)^2 + \frac{1}{2}
(p_2 - \theta x_1)^2.
\]
Quantizing, $p_1 \rightarrow \partial_{x_1}$,  $p_2 \rightarrow
\partial_{x_2}$, $\theta \rightarrow \partial_{t}$ and hence $H
\rightarrow \Delta_0$, the Kohn Laplacian in the case $k=1$ and $\lambda=0$: 
\[
\Delta_0 = \frac{1}{2} \Big( \frac{\partial}{\partial x_1} + x_2 \frac{\partial} {\partial t}
\Big)^2 + \frac{1}{2} \Big(\frac{\partial}{\partial x_2} - x_1 \frac{\partial}{\partial_t}
\Big)^2.
\]

In general, we shall look for $K_\lambda$ in the form 
\[
K_\lambda(\x,\x_0)\,=\, \int_{\R} \frac {E(\x,\x_0,\tau)V_\lambda (\x,\x_0,\tau)}{g(\x,\x_0,\tau)}\,d\tau,
\]
where the function $g$ is a solution of the Hamilton-Jacobi equation: 
\[
\frac{\partial g}{\partial\tau}+\frac12 \big (X_1g\big )^2+\frac 12\big (X_2g\big )^2\,=\, 0,
\]
given by a modified action integral of a complex Hamiltonian problem. The associated energy 
\[
E\,=\, -\frac{\partial g}{\partial \tau}
\]
is the first invariant of motion, and the volume element $V_\lambda$ is the solution of a transport equation, which is order $1$ in the step $2$ case, $k=1$, and
order $2k$ in the higher step case, $k\ge 2$. 
\medskip

\section {\bf The step $2$ case; $k=1$}
Here
\[
X_1=\frac{\partial}{\partial x_1}+2x_2\frac{\partial}{\partial t},\qquad X_2=\frac{\partial}{\partial
x_2}-2x_1\frac{\partial}{\partial t},
\]
which are left-invariant with respect to the following Heisenberg group translation:
\[
\x\circ \y=\big(x^\prime+y^\prime, t+s+2[x_2y_1-x_1y_2]\big).
\]
where $\x=(x^\prime, t)$ and $\y=(y^\prime,s)$.  Moreover, one has $[X_1,X_2]=-4\frac{\partial}{\partial t}=-4T$.

In high dimensional case, the Siegel upper half space $\widetilde \Omega_n$ is defined as:
\begin{equation}
\label{eq:siegel-1}
\widetilde \Omega_n\,=\, \Big\{ (z^\prime, z_{n+1})\in \C^{n+1}:\,\, \im(z_{n+1})>\sum_{j=1}^{n}|z_j|^2\big\}
\end{equation}
where $z^\prime=(z_1,\ldots,z_{n})\in\C^{n}$.

The following elementary but useful identity is the key to discovering the biholomorphic maps:
\begin{equation}
\label{eq:siegel-2}
\im(\zeta)=\Big|\frac{i+\zeta}{2}\Big|^2-\Big|\frac{i-\zeta}{2}\Big|^2.
\end{equation}
With $\zeta=z_{n+1}$ we plug~\eqref{eq:siegel-2} into~\eqref{eq:siegel-1} and rewrite to obtain
\begin{equation}
\label{eq:siegel-3}
\Big|\frac{i+z_{n+1}}{2}\Big|^2>\sum_{j=1}^{n}|z_j|^2+\Big|\frac{i-z_{n+1}}{2}\Big|^2.
\end{equation}
After dividing by $\Big|\frac{i+z_{n+1}}{2}\Big|^2$ and changing notation, inequality~\eqref{eq:siegel-3} becomes
\[
1>\sum_{j=1}^{n+1}|w_j|^2,
\]
the defining property of the unit ball. The explicit mapping is given by
\[
w_j\,=\, \frac{2z_j}{i+z_{n+1}},\quad 1\le j\le n
\]
and
\[
w_{n+1}\,=\, \frac{i-z_{n+1}}{i+z_{n+1}}.
\]
It is easy to check that this transformation $z\mapsto w$ is biholomorphic from $\widetilde \Omega_n$ to ${\bf B}_{n+1}$.

We now describe the analogous situation on $\partial{\widetilde \Omega}_n$ of the Siegel upper half space:
\[
\partial{\widetilde\Omega}_n\, =\, \Big\{ (z^\prime, z_{n+1})\in \C^{n+1}:\,\, \im(z_{n+1})\,=\, \sum_{j=1}^{n}|z_j|^2\Big\}.
\]
The variable $z_{n+1}$ plays a different role, and hence for $\z\in \C^{n+1}$ we write $\z=(z^\prime, z_{n+1})$.
Two natural families of biholomorphic self-maps of $\widetilde\Omega_n$ are the {\it dilations} $\delta_\rho:\widetilde\Omega_n\rightarrow\widetilde \Omega_n$, for $\rho>0$, given by
\[
\delta_\rho(z^\prime, z_{n+1})\,=\, \big(\rho z^\prime, \rho^2 z_{n+1}\big),
\]
and the {\it rotations} $R_A:\widetilde \Omega_n\rightarrow\widetilde \Omega_n$, for $A\in U(n)$, given by
\[
R_A(z^\prime, z_{n+1})\,=\, \big(A(z^\prime), z_{n+1}\big).
\]
To introduce an analogue of translation we consider
\[
\tau_\x:\widetilde \Omega_n\,\rightarrow\, \widetilde \Omega_n,
\]
for $\x=(z^\prime, t)\in \C^n\times \R$, given by
\[
\tau_\x(w^\prime, w_{n+1})\,=\,\Big( w^\prime+z^\prime ,w_{n+1}+t+2i\langle z^\prime ,w^\prime \rangle+i|z^\prime |^2, \Big).
\]

All of the preceding maps extend to self-maps of the boundary $\partial{\widetilde \Omega}_n$.
The action of the family $\{\tau_\x:\, \x=(z, t)\, \in \, \C^{n}\times \R\}$
on $\widetilde \Omega_n\times \partial{\widetilde\Omega}_n$
is faithful and the action on $\partial{\widetilde\Omega}_n$ is simply transitive. We obtain a useful identification of $\partial{\widetilde\Omega}_n$ with $\C^n\times \R$. By this method we equip $ \C^{n}\times\R $ with a group
law:
\[
(\x,\y)\,\mapsto\, \x\circ \y,
\]
characterized by the identity $\tau_\x\cdot \tau_\y=\tau_{\x\circ \y}$. Here
\[
\x\circ \y\,=\, \Big(x^\prime+y^\prime, t+s+2\sum_{j=1}^n\big [x_{n+j}y_j-x_jy_{n+j}\big]\Big).
\]
The resulting space is the {\it Heisenberg group} ${\bf H}_n$ as we mentioned before.

Finally, we observe that the group of biholomorphic automorphisms of $\widetilde \Omega_n$ which fix the point at $\infty$ is generated by dilations, rotations, and translations.
\medskip

\subsection {\bf Lagrangian formalism}
In order to simplify notations, let us return to the case $\C\times \R\thickapprox \R^3$. 
We shall associate a Lagrangian $L:T\R^3\rightarrow\R$ with the Hamiltonian:
\begin{equation*}
H(\x,\xi)\,=\, \frac{1}{2}(\xi_1+2x_2\theta)^2+\frac{1}{2}(\xi_2-2x_1\theta)^2.
\end{equation*}
This can be done by using the {\it Legendre transform} in $(\dot x_1, \dot x_2, \dot t)$. It is known that
\[
H(\x,\dot\x)\,=\, \frac{1}{2}(\dot x_1^2+\dot x_2^2)+\theta(\dot t-2x_2\dot x_1+2x_1\dot x_2).
\]
Using polar coordinates, the Lagrangian:
\[
L\,=\, \frac{1}{2}(\dot r^2+r^2\dot\phi^2)+\theta(\dot t+2r^2\dot\phi).
\]
A computation shows
\begin{equation*}
\begin{split}
\frac{d}{ds}\frac{\partial L}{\partial \dot r}= \ddot r,\quad
&\frac{\partial L}{\partial r}=r\dot\phi \big(\dot\phi+4\theta\big),\\
\frac{\partial L}{\partial \dot \phi}= r^2\dot\phi+2\theta r^2,\quad
&\frac{\partial L}{\partial \phi}=0,\\
\frac{\partial L}{\partial \dot t}=\theta,\quad
&\frac{\partial L}{\partial t}=0,
\end{split}
\end{equation*}
and hence $r(s)$, $\phi(s)$ and $\theta$ satisfy the {\it Euler-Lagrange system}
\begin{equation}
\label{eq:EL1}
\begin{cases} \ddot r=r\dot \phi\big(\dot\phi+4\theta\big)\\
r^2\big(\dot\phi+2\theta\big)=C({\mbox{constant}})\\
\theta=\theta_0={\mbox{constant}}
\end{cases}
\end{equation}
If the geodesic starts at the origin, $r(0)=0$ $\Rightarrow$ $C=0$. Then 2nd equation of~\eqref{eq:EL1} yields $\dot\phi=-2\theta$.
The Euler-Lagrange system becomes
\begin{equation}
\label{eq:EL2}
\begin{cases} \ddot r=-4\theta^2r\\
\dot\phi=-2\theta\\
\theta=\theta_0({\mbox{constant}}).
\end{cases}
\end{equation}
When $\theta_0=0$, the system~\eqref{eq:EL2} becomes
\begin{equation*}
\begin{cases} \ddot r=0\\
\dot\phi=0\\
\theta_0=0.
\end{cases}
\end{equation*}
\medskip

\begin{proposition}
Given a point $P(0,x^\prime)$, there is a {\bf unique} geodesic between the origin and $P$.
It is a straight line in the plane $\{t=0\}$ of length $|x^\prime|=\sqrt{x_1^2+x_2^2}$, and it is
obtained for $\theta=0$.
\end{proposition}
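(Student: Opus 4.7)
The plan is to integrate the reduced Euler--Lagrange system \rf{eq:EL2} together with the horizontal constraint $\dot t+2r^2\dot\phi=0$ (which is the $\theta$-Euler--Lagrange equation of the Lagrangian) and then invoke the endpoint condition $t(\tau)=0$ to eliminate every geodesic except the straight line.

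Existence is immediate in the case $\theta_0=0$: then $\ddot r=0$ and $\dot\phi=0$ give $r(s)=\dot r(0)\,s$ and $\phi(s)\equiv\phi_*$, the horizontal constraint forces $\dot t\equiv 0$, and choosing $\phi_*$ to be the argument of $x'$ together with $\dot r(0)\tau=|x'|$ yields the straight line from the origin to $x'$ in the plane $\{t=0\}$ of length $|x'|$. For uniqueness I would take $\theta_0\ne 0$ and integrate: the equation $\ddot r=-4\theta_0^2 r$ with $r(0)=0$ forces $r(s)=A\sin(2\theta_0 s)$, while $\dot\phi=-2\theta_0$ gives the angular motion. Feeding these into the horizontal constraint produces $\dot t=4\theta_0 A^2\sin^2(2\theta_0 s)$, which via $\sin^2 u=\frac{1}{2}(1-\cos 2u)$ integrates to the closed form $t(\tau)=2\theta_0 A^2\tau-\frac{A^2}{2}\sin(4\theta_0\tau)$.

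The endpoint conditions $t(\tau)=0$ and $r(\tau)=|x'|>0$ (the latter ruling out $A=0$) then reduce to the single transcendental equation $4\theta_0\tau=\sin(4\theta_0\tau)$. Since $u=\sin u$ admits only the root $u=0$ and $\tau>0$, this forces $\theta_0=0$, contradicting the assumption; hence the $\theta_0=0$ branch is the only one, and the length assertion follows directly from its straight-line description. The only step requiring any thought is the reduction to $u=\sin u$; beyond that there is no serious obstacle, and the main organizational point is to exploit $r(0)=0$ at the outset to kill the conserved quantity $C$ so that all the boundary data collapse into one scalar equation.
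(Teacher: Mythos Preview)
Your argument is correct. In the paper the proposition is stated without a labeled proof: existence is the content of the $\theta_0=0$ reduction displayed immediately before the proposition, and uniqueness is implicit in the monotonicity remark $\dot t=4\theta r^2>0$ that appears immediately after (so if $\theta\ne 0$ then $t$ is strictly monotone and cannot return to $0$). Your route is the same in spirit but more explicit: instead of stopping at the sign of $\dot t$, you integrate $\dot t=4\theta_0 A^2\sin^2(2\theta_0 s)$ exactly and reduce the endpoint condition to $4\theta_0\tau=\sin(4\theta_0\tau)$, whose only real solution is $0$. This is a perfectly valid alternative; the paper's monotonicity observation is simply the shorter way to the same conclusion, since $\int_0^\tau 4\theta_0 r^2\,ds$ has the sign of $\theta_0$ without any need to solve for $r$ explicitly.
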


Now let us move the end point $P$ away from the $x^\prime$ with $|x^\prime|\ne 0$. Consider the boundary conditions:
\begin{equation*}
\begin{split}
x^\prime(0)=0,\quad &t(0)=0,\quad \phi(0)=\phi_0\\
|x^\prime(\tau)|=R,\quad & t(\tau)=t,\,\,\,\, \phi(\tau)=\phi_1.
\end{split}
\end{equation*}
We may choose $\phi_0=0$. One has
\begin{equation*}
\begin{cases}  \dot t=-2r^2\dot\phi\\
\dot\phi=-2\theta
\end{cases} \quad \Rightarrow\quad
\dot t=4\theta r^2>0.
\end{equation*}
This implies that $t(s)$ is increasing and if $t(0)=0$, then $t(\tau)>0$.
\medskip

\begin{lemma}
The following relations take place among the boundary conditions:
\begin{equation*}
\begin{split}
\phi_1=&-2\theta\tau,\\
\sin^2\phi_1=& 4\theta^2R^2,\\
t=&\frac{1}{4\theta^2}\frac{\sin(2\phi_1)-2\phi_1}{2},\\
\frac{|t|}{R^2}=&-\mu(\phi_1)=\mu(2\theta \tau),
\end{split}
\end{equation*}
where
\[
\mu(z)=\frac{z}{\sin^2 z}-\cot z.
\]
\end{lemma}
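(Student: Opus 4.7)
The plan is to integrate the Euler--Lagrange system \eqref{eq:EL2} explicitly and to read off the four identities in turn. Since $\theta = \theta_0$ is constant, integrating $\dot\phi = -2\theta$ with $\phi(0) = 0$ gives $\phi(s) = -2\theta s$, and evaluating at $s = \tau$ produces $\phi_1 = -2\theta\tau$.

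For the radial equation $\ddot r + 4\theta^2 r = 0$, the initial condition $r(0) = 0$ forces $r(s) = A\sin(2\theta s)$ for some amplitude $A$. The amplitude is fixed by the arc-length normalization: conservation of the Hamiltonian $H = \frac{1}{2}(\dot r^2 + r^2\dot\phi^2)$ associated with the polar-coordinate Lagrangian gives $H = 2\theta^2 A^2$ at $s = 0$, and setting $H = 1/2$ (unit horizontal speed) yields $A = 1/(2\theta)$. Substituting $r(\tau) = R$ then gives $\sin(2\theta\tau) = 2\theta R$; squaring and using $\phi_1 = -2\theta\tau$ produces $\sin^2\phi_1 = 4\theta^2 R^2$.

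For the formula for $t$, I would substitute $r^2 = \sin^2(2\theta s)/(4\theta^2)$ into $\dot t = 4\theta r^2$ and integrate via $2\sin^2 x = 1 - \cos(2x)$; rewriting with $4\theta\tau = -2\phi_1$ then produces $t = (\sin(2\phi_1) - 2\phi_1)/(8\theta^2)$, which is the stated form. For the last identity, dividing by $R^2 = \sin^2\phi_1/(4\theta^2)$ gives $t/R^2 = (\sin(2\phi_1) - 2\phi_1)/(2\sin^2\phi_1)$, and the double-angle identity $\sin(2\phi_1) = 2\sin\phi_1\cos\phi_1$ reduces this to $-\phi_1/\sin^2\phi_1 + \cot\phi_1 = -\mu(\phi_1)$. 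The equivalence $-\mu(\phi_1) = \mu(2\theta\tau)$ follows from the oddness of $\mu$ combined with $\phi_1 = -2\theta\tau$, and the replacement of $t$ by $|t|$ is justified by the inequality $\dot t > 0$ recorded just before the statement, which forces $t$ and $\theta$ to share sign and makes $-\mu(\phi_1) \ge 0$.

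The only genuine subtlety is pinning down the amplitude $A$, which requires invoking the arc-length normalization; once $A = 1/(2\theta)$ is in hand, the remaining steps are routine trigonometric manipulations together with careful sign tracking.
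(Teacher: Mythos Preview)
Your argument is correct. The paper states this lemma without proof---it appears in a section reviewing known material on Heisenberg geodesics---so there is no original proof to compare against; your direct integration of the reduced Euler--Lagrange system \eqref{eq:EL2} is precisely the standard derivation one would expect.

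One point worth making explicit for a reader: the normalization $H=\tfrac12$ that fixes the amplitude $A=1/(2\theta)$ is not stated anywhere near the lemma in the paper, so you are supplying a hypothesis the paper leaves implicit. It is the right one---the subsequent length formulas in Theorem~\ref{th:Geo} and the identity $f(\x,\tau_j)=\tfrac12 d_j^2$ of Theorem~\ref{th:crt} are consistent with unit-speed parametrization---but it would strengthen the write-up to say so directly rather than to present it as a free choice. Without that normalization the second relation is not an identity among boundary data at all, so this is the one place where the proof rests on a convention rather than on the ODE system alone. The remaining steps (the $t$-integration, the division to produce $\mu$, and the sign bookkeeping via $\dot t>0$ and the oddness of $\mu$) are handled cleanly.
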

\medskip

\begin{lemma}\label{l:mu}
$\mu$ is a monotone increasing diffeomorphism of the interval
$(-\pi, \pi)$ onto $\R$. On each interval $(m \pi, (m+1)
\pi )$, $m\in\mathbb N$, $\mu$ has a unique critical point $x_m$.
On this interval $\mu$ decreases strictly from $+\infty$ to
$\mu(x_m)$ and then increases strictly from $\mu(x_m)$ to
$+\infty$. Moreover
\[
\mu(x_m) + \pi < \mu(x_{m+1}), \quad m\in\mathbb N.
\]
\end{lemma}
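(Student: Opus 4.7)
The plan is to study $\mu$ through the sign of its derivative. Writing
\[
\mu(z) = \frac{z - \sin z \cos z}{\sin^2 z}
\]
makes the symmetry $\mu(-z) = -\mu(z)$ evident, and a short Taylor expansion shows that $\mu$ extends smoothly across $z=0$ with $\mu(0)=0$. A direct quotient-rule computation produces the compact expression
\[
\mu'(z) = \frac{2(\sin z - z\cos z)}{\sin^3 z},
\]
which will be the main object of analysis.

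For the diffeomorphism claim on $(-\pi,\pi)$, I would set $h(z) = \sin z - z\cos z$ and observe that $h(0)=0$ and $h'(z) = z\sin z > 0$ on $(0,\pi)$, so $h > 0$ and hence $\mu' > 0$ on $(0,\pi)$. The explicit formula yields $\mu(z) \to +\infty$ as $z\to\pi^-$; combined with oddness and the inverse function theorem, strict monotonicity produces a smooth bijection $(-\pi,\pi)\to\R$.

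For the structure on $(m\pi,(m+1)\pi)$ with $m\ge 1$, I would first verify $\mu(z)\to+\infty$ at both endpoints by shifting $u = z - m\pi$ in the quotient: the numerator tends to $m\pi$ (respectively $(m+1)\pi$) while $\sin^2 z \to 0^+$. Critical points of $\mu$ are exactly the solutions of $\tan z = z$, since $h((m+\frac{1}{2})\pi) = \pm 1 \ne 0$ rules out the bad values of $\cos$. Because $(\tan z - z)' = \tan^2 z \ge 0$, the function $\tan z - z$ is strictly monotone on each branch of continuity; checking the endpoint values rules out any solution in $((m+\frac{1}{2})\pi,(m+1)\pi)$ and produces a unique root $x_m \in (m\pi,(m+\frac{1}{2})\pi)$. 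A sign check shows $\mu'(z)\to -\infty$ as $z\to m\pi^+$ and $\mu'(z)\to +\infty$ as $z\to(m+1)\pi^-$, so $x_m$ is the unique minimum and $\mu$ has the decreasing-then-increasing shape claimed.

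The comparison $\mu(x_m) + \pi < \mu(x_{m+1})$ is where I expect the main simplification. Using $\tan x_m = x_m$, one obtains $\sin^2 x_m = x_m^2/(1+x_m^2)$ and $\cot x_m = 1/x_m$, which collapse the definition of $\mu$ to the clean identity $\mu(x_m) = x_m$. The inequality therefore reduces to $x_m + \pi < x_{m+1}$. Setting $y = x_m + \pi \in ((m+1)\pi,(m+\frac{3}{2})\pi)$, periodicity of $\tan$ gives $\tan y - y = -\pi < 0 = \tan x_{m+1} - x_{m+1}$, and strict monotonicity of $\tan z - z$ on that branch forces $y < x_{m+1}$. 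The hard part is spotting the identity $\mu(x_m) = x_m$ at the critical points; once it is in hand, the rest is straightforward interval bookkeeping.
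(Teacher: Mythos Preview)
The paper states this lemma without proof, so there is no argument to compare your proposal against. Your proof is complete and correct: the derivative formula $\mu'(z)=2(\sin z-z\cos z)/\sin^3 z$ is right, the reduction of the critical-point equation to $\tan z=z$ and the monotonicity of $\tan z-z$ on each branch are handled cleanly, and the identity $\mu(x_m)=x_m$ (via $\tan x_m=x_m$) is exactly the observation that collapses the final inequality to $x_m+\pi<x_{m+1}$, which your periodicity argument then dispatches. Nothing further is needed.
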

\medskip 

\begin{theorem}
\label{th:Geo} On ${\bf H}_1$, there are a {\bf finite} number of
geodesics connecting $\x$ to ${\bf
0}\,\Leftrightarrow\,x^\prime\ne 0$. They are parametrized by
solutions $\tau_m$, $m=1,\ldots,N$, of the transcendental
equation
\begin{equation} \label{eq:mu}
|t|\,=\, \mu(\tau_m)|x^\prime|^2=\mu(\tau_m)(x_1^2+x_2^2),
\end{equation}
where
\[
\mu(z)=\frac{z}{\sin^2z}-\cot z.
\]
The length
\[
d_m(\x)= \sqrt{\nu(\tau_m)\big(|x^\prime|^2+|t|\big)}
\]
where
\[
\nu(z)=\frac{z^2}{z+\sin^2z-\sin\,z\cos\,z}.
\]
Here $\tau_1,\ldots,\tau_N$ {are} the solutions of equation~\eqref{eq:mu}. The shortest $d_1(\x)$ is called the Carnot-Carath\'eodory distance.
\end{theorem}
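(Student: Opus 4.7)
The plan is to integrate the reduced Euler--Lagrange system \eqref{eq:EL2} explicitly, reduce the endpoint conditions at $s=\tau$ to a transcendental equation in $2\theta\tau$, count its roots by Lemma \ref{l:mu}, and read off the length from conservation of the Hamiltonian. With $r(0)=0$ and $\theta=\theta_0$ constant, $\ddot r=-4\theta^2 r$ integrates to $r(s)=A\sin(2\theta s)$ with $A=\dot r(0)/(2\theta)$; the second line of \eqref{eq:EL2} yields $\phi(s)=\phi_0-2\theta s$; and $\dot t=-2r^2\dot\phi=4\theta A^2\sin^2(2\theta s)$ integrates, via $\sin^2 u=\tfrac12(1-\cos 2u)$, to
\[
t(s)=A^2\bigl[2\theta s-\sin(2\theta s)\cos(2\theta s)\bigr].
\]
Imposing $|x'(\tau)|=R$ gives $R=A\sin(2\theta\tau)$, and dividing $t(\tau)$ by $R^2=A^2\sin^2(2\theta\tau)$ eliminates $A$ to produce the single transcendental relation
\[
\frac{|t|}{R^2}=\frac{2\theta\tau}{\sin^2(2\theta\tau)}-\cot(2\theta\tau)=\mu(2\theta\tau),
\]
which is precisely \eqref{eq:mu} after writing $\tau_m=2\theta\tau$.

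Counting the roots then follows directly from Lemma \ref{l:mu}. Since $\mu$ is odd, it suffices to assume $|t|/R^2>0$ and look for positive $\tau_m$. On $(0,\pi)$ the lemma gives a unique root; on each subsequent interval $(m\pi,(m+1)\pi)$, $\mu$ decreases from $+\infty$ to its minimum $\mu(x_m)$ and returns to $+\infty$, contributing at most two roots there. The accumulation estimate $\mu(x_{m+1})>\mu(x_m)+\pi$ forces $\mu(x_m)\to+\infty$, so only finitely many of these intervals meet the level $|t|/R^2\geq\mu(x_m)$. This gives the finite total $N$ of solutions claimed in the theorem.

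For the length, the Hamiltonian $H=\tfrac12|\dot x'(s)|^2$ is a first integral; evaluating at $s=0$ gives $H=2\theta^2 A^2$, and hence
\[
d_m=\int_0^\tau|\dot x'(s)|\,ds=2|\theta|\,A\,\tau=\frac{R\,\tau_m}{\sin\tau_m},
\]
using $A=R/\sin(2\theta\tau)$ and $\tau_m=2\theta\tau$. Squaring yields $d_m^2=R^2\tau_m^2/\sin^2\tau_m$, and the elementary algebraic identity
\[
\sin^2 z\bigl(1+\mu(z)\bigr)=z+\sin^2 z-\sin z\cos z,
\]
combined with the transcendental relation $|t|=R^2\mu(\tau_m)$, converts the right-hand side into $\nu(\tau_m)(R^2+|t|)=\nu(\tau_m)(|x'|^2+|t|)$, as desired. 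Labelling the smallest of the finitely many $d_m$ as $d_1$ defines the Carnot--Carath\'eodory distance.

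The essential technical obstacle, the monotonicity and critical-point analysis of $\mu$, is already packaged in Lemma \ref{l:mu}, so the remaining work is bookkeeping. One must respect the sign of $\theta$, exploiting the oddness of $\mu$ to treat $t<0$ on the same footing as $t>0$, and verify the short trigonometric identity linking $\mu$ and $\nu$. Everything else is elementary ODE integration together with the standard observation that the Hamiltonian is constant along its bicharacteristics.
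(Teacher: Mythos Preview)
Your argument is correct and follows exactly the route the paper sets up: you explicitly integrate the reduced Euler--Lagrange system to recover the relations of Lemma~4.2 (the paper's $\sin^2\phi_1=4\theta^2R^2$ and $|t|/R^2=\mu(2\theta\tau)$), invoke Lemma~\ref{l:mu} to count the finitely many roots, and convert the conserved energy into the length formula via the identity $\sin^2 z\,(1+\mu(z))=z+\sin^2 z-\sin z\cos z$. The paper itself does not spell out a proof of the theorem beyond those two lemmas, so your write-up is precisely the intended filling-in of the details; the only point you leave implicit is the converse direction ($x'=0\Rightarrow$ infinitely many geodesics), which the paper defers to the next theorem as well.
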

\smallskip 

It is easy to see that the number of geodesics increasing without bound as $\frac{|t|}{|x^\prime|^2}\rightarrow\infty$. We have the following theorem. 
\smallskip

\begin{theorem}
Every point of the line $(0, t)$ is connected to the origin by an
{\bf infinite} number of geodesics with lengths
\[
d_m^2=m\pi|t|,\qquad m\in {\mathbb N}.
\]
For each length $d_m$, the equations of geodesics are
\begin{equation*}
\begin{split}
r^{(m)}(\phi)=&\sqrt{\frac{|t|}{m\pi}}\sin(\phi-\phi_0)\\
t^{(m)}(\phi)=&\frac{|t|}{2m\pi}\Big(\sin(2(\phi-\phi_0))-2(\phi-\phi_0)\Big).
\end{split}
\end{equation*}
The geodesic of that length are parametrized by the circle ${\mathbb S}^1$.
\end{theorem}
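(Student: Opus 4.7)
The plan is to solve the Euler--Lagrange system~\eqref{eq:EL2} explicitly in the case $|x^\prime|=0$ excluded from Theorem~\ref{th:Geo}, i.e.\ with the degenerate boundary conditions $r(0)=r(\tau)=0$, $t(0)=0$, $t(\tau)=t$, and then read off both the lengths and the geodesic equations from the closed-form solution. Heuristically, the transcendental condition $|t|=\mu(\tau_m)|x^\prime|^2$ of Theorem~\ref{th:Geo} ``degenerates'' to a discrete spectrum $2\theta\tau\in\pi\mathbb{N}$ as $|x^\prime|\to 0$ (since, by Lemma~\ref{l:mu}, $\mu$ blows up only at integer multiples of $\pi$), producing infinitely many geodesics.

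The first step is to integrate $\ddot r=-4\theta^2 r$ with $r(0)=0$, giving $r(s)=A\sin(2\theta s)$ for some amplitude $A$ and $\theta=\theta_0$ constant. The endpoint condition $r(\tau)=0$ then forces $\sin(2\theta\tau)=0$, hence
\[
2\theta\tau=m\pi,\qquad m\in\mathbb{N}.
\]
Combining $\dot\phi=-2\theta$ with the horizontality relation $\dot t=-2r^2\dot\phi$ gives $\dot t=4\theta r^2$, and so $t(\tau)=2\theta A^2\tau-\frac{A^2}{2}\sin(4\theta\tau)=2\theta A^2\tau$ since $4\theta\tau=2m\pi$. Substituting $\theta=m\pi/(2\tau)$ yields $|t|=m\pi A^2$, i.e.\ $A=\sqrt{|t|/(m\pi)}$.

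For the length, the horizontal speed $\sqrt{\dot r^2+r^2\dot\phi^2}$ is the constant $2|\theta|A$ along our solution, so the total length is $2|\theta|A\tau=m\pi A$, yielding $d_m^2=m\pi|t|$. Next, the change of variable $s=-(\phi-\phi_0)/(2\theta)$ (from $\dot\phi=-2\theta$ with $\phi(0)=\phi_0$) converts $r(s)=A\sin(2\theta s)$ and $t(s)=2\theta A^2 s-\frac{A^2}{2}\sin(4\theta s)$ into the claimed $\phi$-parametrized formulas after elementary trigonometric simplification; the convention $r\ge 0$ is enforced by shifting $\phi_0$ by $\pi$ when needed. Finally, the $\mathbb{S}^1$-family for each fixed $m$ arises from the rotational symmetry of the system in $\phi$: the starting angle $\phi_0$ is a free parameter in $\mathbb{R}/2\pi\mathbb{Z}$.

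I expect the main obstacle to be not the calculation itself, which is routine once the ansatz is in place, but rather the justification that \emph{every} length-minimizing horizontal curve from the origin to $(0,t)$ arises from this construction. This requires revisiting the bicharacteristic analysis underlying Theorem~\ref{th:Geo} at the singular limit $|x^\prime|\to 0$ and verifying that no additional branch of geodesics can appear in this degeneration. Beyond that, the only remaining care is to keep the sign of $\theta$ consistent with the sign of $t$ throughout.
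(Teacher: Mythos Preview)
The paper states this theorem without proof; only the remark about the projected circle (radius $R_m=\tfrac12\sqrt{|t|/m\pi}$) follows it. Your approach---solving the reduced Euler--Lagrange system~\eqref{eq:EL2} with the degenerate boundary data $r(0)=r(\tau)=0$, extracting the quantization $2\theta\tau=m\pi$, and then computing $A$, the arclength, and the $\phi$-reparametrization---is exactly the calculation the paper's setup points toward, and it is carried out correctly. In that sense you are supplying the proof the paper omits rather than taking a different route.

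Two small comments. First, the concern you flag at the end is slightly misplaced: the theorem asserts that these curves are \emph{geodesics} (projections of bicharacteristics), not that they are \emph{length-minimizers}; so what must be checked is that every bicharacteristic joining the origin to $(0,0,t)$ satisfies $r(0)=r(\tau)=0$ in the reduced system, which is immediate since both endpoints have $r=0$, and hence falls into your family. No separate limiting argument from Theorem~\ref{th:Geo} is needed. Second, for $m\ge 2$ the polar radius $r(s)=A\sin(2\theta s)$ changes sign, reflecting that the planar projection passes through the origin $m-1$ times before closing; the formula $r^{(m)}(\phi)=A\sin(\phi-\phi_0)$ should therefore be read as describing one loop of the projected circle (consistent with the radius $R_m=A/2$ given just after the theorem), with the full geodesic obtained by traversing it $m$ times. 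Your remark about shifting $\phi_0$ by $\pi$ handles the sign convention adequately.
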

\medskip 
The projection of the $m$-th geodesic on the $(x_1,x_2)$-plane is a circle with radius
\[
R_m=\frac{1}{2}\sqrt{\frac{|t|}{m\pi}}
\]
and area
\[
\sigma_m=\frac{|t|}{4m}.
\]
\medskip

\subsection{\bf Fundamental solution for the operator $\Delta_\lambda$ for $k=1$} 

Consider the complex action function
\begin{equation*}
\begin{split}
g(x^\prime,t,\tau)=&-it+\int_0^\tau\Big\{\sum_{j=1}^2\dot x_j\xi_j-H\Big\}ds\\
=&-it+\coth(2\tau)|x^\prime|^2\,=\, -it+\coth(2\tau)\big( x_1^2+x_2^2\big). 
\end{split}
\end{equation*}
Like the classical action, the complex action $g$ satisfies the Hamilton-Jacobi equation:
\[
\frac{\partial g}{\partial\tau}+H\big(\x,\nabla_\x g\big)=0.
\]
Set
\[
f(\x,\tau)=\tau g(\x,\tau)=\tau\big(-it+\coth(2\tau)|x^\prime|^2\big).
\]
This is so called a ``{\it modified complex distance}".
\medskip

\begin{theorem}
\label{th:crt} Let $\tau_1(\x)$, $\tau_2(\x)$,... denote the
critical points of $f(\x,\tau)$, {\it i.e.,}
\[
\frac{\partial f}{\partial\tau}(\x,\tau_j(\x))=0.
\]
Then
\[
f(\x,\tau_j(\x))=\frac{1}{2}d_j^2(\x).
\]
\end{theorem}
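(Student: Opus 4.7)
\textbf{Proof plan for Theorem \ref{th:crt}.} The idea is to make explicit the substitution that turns the critical-point equation for $f$ into the transcendental equation \eqref{eq:mu} for geodesic parameters, and then compute $f$ at the critical point by direct algebraic simplification using the identities that link $\mu$ to $\nu$.

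First I would differentiate $f(\x,\tau)=-it\tau+\tau\coth(2\tau)\,|x'|^2$ in $\tau$:
\begin{equation*}
\pdev{f}{\tau}(\x,\tau)=-it+|x'|^2\Bigl[\coth(2\tau)-\frac{2\tau}{\sinh^2(2\tau)}\Bigr].
\end{equation*}
Setting this equal to zero gives the critical-point equation. To compare with the geodesic equation $|t|=\mu(\tau_m)|x'|^2$, I substitute $\tau=\pm is/2$ (with the sign chosen according to $\sgn(t)$) and apply the elementary identities $\coth(is)=-i\cot s$ and $\sinh(is)=i\sin s$. The bracket becomes
\begin{equation*}
\coth(2\tau)-\frac{2\tau}{\sinh^2(2\tau)}\;=\;\pm i\Bigl[\frac{s}{\sin^2 s}-\cot s\Bigr]\;=\;\pm i\,\mu(s),
\end{equation*}
so the critical-point equation collapses to $|t|=\mu(s)\,|x'|^2$, which is exactly \eqref{eq:mu}. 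This identifies each critical point $\tau_j$ of $f$ with one of the geodesic parameters produced in Theorem \ref{th:Geo}.

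Next I would evaluate $f$ at such a critical point. Under the same substitution $\tau=\pm is/2$,
\begin{equation*}
f(\x,\tau)\;=\;\tfrac{1}{2}\bigl[\,|t|\,s+s\cot(s)\,|x'|^2\bigr],
\end{equation*}
and then I feed in the critical-point relation $|t|=\mu(s)|x'|^2$ to obtain
\begin{equation*}
f(\x,\tau_j)\;=\;\tfrac{1}{2}|x'|^2\,s\bigl[\mu(s)+\cot s\bigr]\;=\;\tfrac{1}{2}\,\frac{s^{2}\,|x'|^2}{\sin^{2}s},
\end{equation*}
using the definition of $\mu$. It remains to recognise this as $\tfrac12 d_j^{2}$. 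From Theorem \ref{th:Geo}, $d_j^{2}=\nu(s)\bigl(|x'|^2+|t|\bigr)$; substituting $|t|=\mu(s)|x'|^2$ and clearing the common denominator $\sin^{2}s$ produces
\begin{equation*}
\nu(s)\bigl(|x'|^2+|t|\bigr)\;=\;\frac{s^{2}}{s+\sin^{2}s-\sin s\cos s}\cdot\frac{s+\sin^{2}s-\sin s\cos s}{\sin^{2}s}\,|x'|^2\;=\;\frac{s^{2}\,|x'|^2}{\sin^{2}s},
\end{equation*}
which matches the previous expression for $f(\x,\tau_j)$.

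The main bookkeeping issue, rather than any conceptual obstacle, is the choice of branch in the substitution $\tau=\pm is/2$: the sign must be tied to $\sgn(t)$ so that the imaginary parts cancel and the critical-point equation collapses to the real equation in $\mu(s)$. Once the substitution is handled, the rest is a one-line identity $\mu(s)+\cot s=s/\sin^{2}s$ combined with the algebraic identity relating $\mu$ and $\nu$ displayed above.
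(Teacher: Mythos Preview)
Your argument is correct. The substitution $\tau=\pm is/2$ turns the critical-point condition for $f$ into the transcendental equation $|t|=\mu(s)|x'|^{2}$ of Theorem~\ref{th:Geo}, and the identities $\mu(s)+\cot s=s/\sin^{2}s$ and $1+\mu(s)=\bigl(s+\sin^{2}s-\sin s\cos s\bigr)/\sin^{2}s$ then yield $f(\x,\tau_j)=\tfrac12\,s^{2}|x'|^{2}/\sin^{2}s=\tfrac12\,\nu(s)\bigl(|x'|^{2}+|t|\bigr)=\tfrac12\,d_j^{2}$, as desired.

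The paper itself does not prove Theorem~\ref{th:crt}; it is stated in the review portion of Section~4 and the reader is referred to the monographs \cite{CCGr}, \cite{CCFI}, \cite{CT} for details. Your computation is exactly the standard one carried out in those references, so there is nothing to compare: you have supplied the omitted proof. The only cosmetic point is that you might state explicitly that the critical points of $f(\x,\cdot)$ lie on the imaginary $\tau$-axis (which is what justifies the ansatz $\tau=\pm is/2$ rather than treating it as a mere change of variable), but this is implicit in your remark about the imaginary parts cancelling.
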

\medskip

It is expected that the fundamental solution has the following form:
\begin{equation}
\label{eq:fund100}
K_\lambda(\x,\y)=\int_\R \frac{E(\x,\y,\tau)V_\lambda(\x,\y,\tau)}{g(\x,\y,\tau)}d\tau
\end{equation}
Since $\Delta_\lambda$ is left-invariant, we may set $\x_0={\bf 0}$ in $K_\lambda(\x,\x_0)$:
\[
K_\lambda(\x,{\bf 0})=\int_{\R}\frac {E(\x,\tau)V_\lambda(\x,\tau)}{g(\x,\tau)}d\tau,
\]
where $E$ and $V_\lambda$ can be calculated explicitly:
\[
E(\x,\tau)=-\frac{\partial g}{\partial\tau}=\frac{2(x_1^2+x_2^2)}{\sinh^2(2\tau)},
\]
and
\[
V_\lambda(\x,\tau)=-\frac1{4\pi^2}e^{-2\lambda \tau}\frac{\sinh(2\tau)}{x_1^2+x_2^2}.
\]
Using contour integration, one obtains
\begin{equation}
\label{eq:FS}
\begin{split}
K_\lambda(\x)\,=\, -\frac{\Gamma\big(\frac{1+\lambda}{2}\big)\Gamma\big(\frac{1-\lambda}{2}\big)}
{4\pi^2}\times
(|x^\prime|^2-it)^{-\frac{1+\lambda}{2}}(|x^\prime|^2+it)^{-\frac{1-\lambda}{2}}.
\end{split}
\end{equation}
This is the famous Folland--Stein formula \cite{FS}. Moreover, if $\lambda\ne \pm(n+2\ell)$, $\ell\in\mathbb Z_+$,
\[
\big\|\mathcal {P}(X_j,X_K)K_\lambda(f)\big\|_{L^p_k}\le C\|f\|_{L^p_k}
\]
and hence $\|K_\lambda(f)\|_{L^p_{k+1}}\le C\|f\|_{L^p_k}$ 
for $k\in\mathbb Z_+$ and $1<p<\infty$. Here $\mathcal {P}(X_j,X_K)$ is any quadratic polynomial in $X_j,X_k$. For detailed discussion, we refer readers to the books by Calin--Chang--Greiner \cite{CCGr};  Calin--Chang--Funrutani--Iwasaki \cite{CCFI} and  Chang--Tie \cite{CT}. 
\medskip 

\section{\bf The higher step case; $k\ge 2$}
As we know, there is no group structure on the boundary $\partial\Omega_k$ in this case. Moreover, solutions for Hamilton's system can not be written into elementary functions. Let us look at the simplest higher step case in $\C^2$, {\it i.e.,} $k=2$. In this case, one has 
\[
X_1= \frac {\partial}{\partial x_1} + 4x_2|x^\prime|^2 \frac
{\partial}{\partial t},\quad X_2= \frac {\partial}{\partial
x_2} - 4x_1|x^\prime|^2 \frac {\partial}{\partial t},
\]
with $|x^\prime|^2 = x_1^2 + x_2^2$. Since 
$$
[X_1,X_2]\,=\, -16|x^\prime|^2  \frac {\partial}{\partial t}, ~~  [X_1,[X_1,X_2]]\,=\, -32 x_1 \frac {\partial}{\partial t}, 
~~[X_1,,[X_1,X_2]]]\,=\, -32 \frac {\partial}{\partial t}, 
$$  
hence $\partial\Omega_2$ is step $4$ away from the ${(x_1,x_2)}$-plane. Denote $x_0=t$. After length calculations, we have the following result. 
\medskip

\begin{theorem}\label{t:1}
Let $P(x_1,x_2,t)$ be a point of $\R^3$. Then
\smallskip

\noindent $(i)$. If $t=0$, then there is a unique geodesic between
the origin and $P$. It is a straight line in the $(x_1,x_2)$-plane of length
$\sqrt{x_1^2 + x_2^2}$.
\smallskip

\noindent $(ii)$. If $|x^\prime| = 0$ and $t \not=0$ there are infinitely many geodesics between the origin and $P$.
\smallskip

The subRiemannian geodesics that join the origin to a point
$(0,0,t)$ have lengths $d_1$, $d_2$, $d_3,\ldots$, where
\[
(d_m)^4 = \frac { m^3 K^4}{4 Q } |t|
\]
with
\[
K = \int_0^1 \frac{d\omega}{\sqrt{ (1 - \omega^2) ( 1 - k^2 \omega^2)}} 
\] 
being the complete Jacobi integral and
\[
k=\frac{\sqrt{2}}{4}\big(\sqrt 3-1\big), \quad Q= \frac{1}{4} \frac{ \Gamma(1/6)}{\Gamma(2/3) } \sqrt{\pi}.
\]
For each length $d_m$, the geodesics of that length are
parameterized by the circle ${\mathbb S}^1$.
\smallskip

\noindent $(iii)$. If $P$ is away from the $t$-axis and $x'$-plane with
$0< \frac{|t|}{(x_1^2+x_2^2)^2} < \infty $, then there are not less than $2m-1$
and not more than $2m+1$ subRiemannian geodesics between the origin
and the point $P$, where the integer $m$ is defined by
\[
\big(m - \frac{1}{2}\big) Q \,<\, \frac{3}{4} \frac{|t|}{(x_1^2+x_2^2)^2} \,\leq\, 
\big(m + \frac{1}{2}\big) Q.
\]
\end{theorem}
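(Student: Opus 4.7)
The plan is to mimic in the step $4$ setting the Lagrangian analysis of Section 4: pass to polar coordinates so that rotational symmetry reduces the geodesic equations to a one-dimensional mechanical system in the radial variable $r$, and read off geodesics from level sets of that system. The three cases of the theorem correspond to three qualitatively different ways in which the resulting transcendental equation---now governed by Jacobi elliptic integrals rather than elementary trigonometry---can be solved. Concretely I start from the Hamiltonian
\[
H(\x,\xi)=\frac{1}{2}\bigl(\xi_1+4x_2|x'|^2\theta\bigr)^2+\frac{1}{2}\bigl(\xi_2-4x_1|x'|^2\theta\bigr)^2,
\]
perform a Legendre transform as in Subsection 4.1, and set $x_1=r\cos\phi$, $x_2=r\sin\phi$ to obtain
\[
L=\frac{1}{2}\bigl(\dot r^2+r^2\dot\phi^2\bigr)+\theta\bigl(\dot t+4r^4\dot\phi\bigr).
\]
The Euler--Lagrange system gives three first integrals $\theta\equiv\theta_0$, $r^2\dot\phi+4\theta r^4=C$, and the energy; a geodesic from the origin forces $C=0$, hence $\dot\phi=-4\theta r^2$, and the unit-speed normalization collapses everything to the separable radial equation
\[
\dot r^2+16\theta^2 r^6=1.
\]

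I would dispose of cases (i) and (ii) first. For (i), the horizontality identity $\dot t=-4r^4\dot\phi=16\theta r^6$ has constant sign along any half-period of $r$, so $t=0$ forces $\theta=0$; then $\dot\phi=0$ and the unique geodesic is the straight segment of length $|x'|$ in the $(x_1,x_2)$-plane. For (ii), the substitution $u=(4|\theta|)^{1/3}r$ converts the radial quadrature $\int_0^{r_{\max}} dr/\sqrt{1-16\theta^2 r^6}$ into a complete elliptic integral of the first kind with modulus $k=\frac{\sqrt 2}{4}(\sqrt 3-1)$, which is precisely how $K$ enters. Since $x'(0)=x'(\tau)=0$, the radial coordinate must execute an integer number $m$ of full periods, and integrating $\dot t=16\theta r^6$ over one period yields the constant $Q$; eliminating $\theta$ from the equations for $|t|$ and the length then produces $(d_m)^4=m^3K^4|t|/(4Q)$. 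The free initial phase $\phi_0$ parametrizes the asserted ${\mathbb S}^1$-family for each fixed length $d_m$.

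The main technical case is (iii), and I plan to handle it in complete analogy with Lemma \ref{l:mu} and Theorem \ref{th:Geo}. I would introduce a transcendental function $\mu_2$, defined along the elliptic radial trajectory, so that the boundary conditions read
\[
\frac{|t|}{|x'|^4}=\mu_2(\tau),
\]
where $\tau$ is the scaled arclength parameter counting how many radial half-periods have been traversed. The structural picture I expect is that $\mu_2$ is a monotone diffeomorphism of the first half-period onto $\R_+$, and on each subsequent interval of length $Q$ has exactly one critical point, with successive critical values separated by $Q$; counting preimages of $|t|/|x'|^4$ under $\mu_2$ then produces the $2m-1\le N\le 2m+1$ bound, with the integer $m$ being exactly the one in the statement. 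The main obstacle is proving this qualitative picture of $\mu_2$: in the step $2$ case the analogous claims reduced to elementary identities for $z/\sin^2 z-\cot z$, whereas here they require careful estimates on the derivatives of complete elliptic integrals together with the non-obvious fact that the $t$-increment per radial period equals exactly $Q$ with no lower-order drift that could destroy the uniform separation of critical values. Once this monotonicity and gap structure is established, the three assertions of Theorem \ref{t:1} follow by the same bookkeeping used for Theorem \ref{th:Geo}.
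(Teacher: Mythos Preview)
The paper does not actually prove this theorem: it is stated after the phrase ``After length calculations, we have the following result,'' and the companion Theorem~\ref{thm:step4-2} is likewise presented with ``Here is just state the result.'' The detailed computations are deferred to a forthcoming paper, so there is no proof in the text to compare your proposal against.

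That said, your outline is the natural step-$4$ analogue of the Lagrangian analysis in Section~4, and parts (i) and (ii) are essentially correct. The polar reduction, the conservation laws forcing $C=0$ from the origin, the sign argument giving $\theta=0$ when $t=0$, and the reduction of the radial quadrature $\int_0^1(1-u^6)^{-1/2}\,du$ to a complete elliptic integral with modulus $k=\tfrac{\sqrt2}{4}(\sqrt3-1)$ are all sound. One small numerical slip: the relevant full-period length in the normalized variable is $\tfrac{4}{3}Q$ rather than $Q$ (note that $\int_0^1(1-u^6)^{-1/2}\,du=\tfrac{2}{3}Q$ via the beta function), which is consistent with the factor $\tfrac{3}{4}$ appearing in the statement of part (iii).

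For part (iii) you have correctly located the genuine difficulty. Defining the step-$4$ transcendental function $\mu_2$ and establishing the analogue of Lemma~\ref{l:mu}---monotonicity on the first interval, a single critical point per subsequent interval, and uniform separation of critical values---is exactly what is needed, and it does not reduce to elementary identities as in the trigonometric case. You are honest that this remains an obstacle; it is the substantive content the paper defers to its sequel, and your proposal does not fill it. So your outline is a faithful roadmap, but the proof of (iii) is not complete until that elliptic-function analysis is carried out.
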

\smallskip

We can also compute the lengths of geodesics connecting the origin and the point $(x_1,x_2, t)$. Here is just state the result. 
\smallskip

\begin{theorem}\label{thm:step4-2} Let
$\tau_{j}$ be the critical points of the modified complex action
$f(\tau) = \tau g(\tau)$. Setting $\zeta_j = \digamma(i
\tau_{j})$, the lengths of the geodesics between the origin and
the point $(x_1,x_2, t)$, $|x^\prime |\neq 0$ are given by
$$ \ell^4 _j = \nu(\zeta_j) \big( |t| + |x^\prime |^4 \big).$$
Here
\begin{equation*}
\begin{split}
\digamma(z)  =&\,  \frac{1+ \sqrt{3}}{4^{1/3}}z-3^{1/2} \,
2^{-2/3}z-\frac{1}{2} \tan^{-1} \Bigg( \frac{ \hbox{sd} ( 2^{4/3} \, 3^{1/4} z) }{2 \cdot 3^{1/4}} \Bigg) \\
&\, +\frac{1}{2} \bigg\{ u E( am^{-1} \omega, k')
+\frac {i}{2}\log  \frac {\theta_4(x-i y)}{\theta_4(x+iy)}\, + u\Big[ \big( \frac{2\pi}{\Gamma(1/6) \Gamma(1/3)} \big)^2 -
\frac{3 - \sqrt{3}}{6}\Big] am^{-1} \omega
 \bigg\}
\end{split}
\end{equation*}
with
\[
u = 2^{4/3}\, 3^{1/4} \, z,\quad{\mbox{and}}\quad am^{-1} \omega
=\hbox{sn}^{-1} (\sqrt{3}-1, k').
\] Here
\[
am(v)=\int_0^v {\hbox{dn}}(\gamma) \,d\gamma
\]
and $\theta_4$ stands for Jacobi's zeta function.
\end{theorem}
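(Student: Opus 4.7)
The strategy parallels the step $2$ derivation that led to \eqref{eq:FS} and to Theorem~\ref{th:crt}: construct a complex action $g(\x,\tau)$ via a modified action integral, form the modified complex distance $f(\tau)=\tau g(\tau)$, and identify the value of $f$ at its critical points with the (square of the) squared subRiemannian length. The new scaling $\ell^4 \sim f(\tau_j)$ (versus $d^2 \sim f(\tau)$ in step $2$) reflects the non-isotropic homogeneity of $\Omega_2$, where $t$ scales like $|x'|^4$; this is already visible in the dilation $(x',t)\mapsto(\rho x',\rho^4 t)$ that leaves $\partial\Omega_2$ invariant and is exactly matched by the right-hand side $|t|+|x'|^4$.

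\medskip\noindent
First I would set up the Hamiltonian
$$H(\x,\xi)=\tfrac12\bigl(\xi_1+4x_2|x'|^2\theta\bigr)^2+\tfrac12\bigl(\xi_2-4x_1|x'|^2\theta\bigr)^2,$$
pass to polar coordinates $(r,\phi,t)$ in which the angular momentum $r^2(\dot\phi+4\theta r^2)=C$ and $\theta$ are conserved, and reduce Hamilton's system to a single quadrature for $r(s)$. Unlike the step $2$ case, the resulting ODE is quartic in $r^2$, so elementary trigonometric integration fails; instead the solution is expressible through Jacobi elliptic functions $\mathrm{sn},\,\mathrm{cn},\,\mathrm{dn}$ of modulus $k=\frac{\sqrt2}{4}(\sqrt3-1)$ (the same $k$ that appeared in Theorem~\ref{t:1}), and the period integral reproduces the constant $Q=\frac14\Gamma(1/6)\Gamma(2/3)^{-1}\sqrt\pi$. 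The boundary conditions $r(0)=0$, $r(\tau)=|x'|$, $t(\tau)=t$ then constrain $\theta$ and $\tau$ implicitly via a transcendental relation analogous to $|t|/R^2=\mu(2\theta\tau)$ in step $2$.

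\medskip\noindent
Next I would define the complex action
$$g(\x,\tau)=-it+\int_0^\tau\Bigl\{\sum_{j=1}^2\dot x_j\xi_j-H\Bigr\}ds$$
evaluated on the complexified (i.e. $\theta\mapsto i\theta$) solution, verify that it satisfies the Hamilton--Jacobi equation
$\partial_\tau g+H(\x,\nabla_\x g)=0$, and form the modified complex distance $f(\tau)=\tau g(\tau)$. Differentiating along critical points $\partial_\tau f(\tau_j)=0$ forces the conserved-energy relation $g(\tau_j)=-\tau_j\partial_\tau g(\tau_j)$, which after substitution of the explicit elliptic-function solution can be repackaged as $f(\tau_j)=\tfrac14\nu(\zeta_j)(|t|+|x'|^4)$, giving $\ell_j^4=\nu(\zeta_j)(|t|+|x'|^4)$ in parallel with Theorem~\ref{th:crt}. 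The change of variable $\zeta=\digamma(i\tau)$ is the linearizing substitution that converts the implicit elliptic relations into the single function $\nu$.

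\medskip\noindent
The main obstacle is identifying the precise form of $\digamma$. Its several summands correspond to distinct contributions in the reduction: the linear-in-$z$ terms absorb the drift of the radial flow, the term $\tan^{-1}\bigl(\mathrm{sd}(2^{4/3}3^{1/4}z)/(2\cdot3^{1/4})\bigr)$ records the angular monodromy, the Legendre incomplete integral $E(\mathrm{am}^{-1}\omega,k')$ encodes the arc-length contribution along the elliptic orbit, and $\tfrac{i}{2}\log[\theta_4(x-iy)/\theta_4(x+iy)]$ is the imaginary-period correction that controls the $t$-variable. The constant $[(2\pi/(\Gamma(1/6)\Gamma(1/3)))^2-\tfrac{3-\sqrt3}{6}]$ fixes normalization between these elliptic and gamma-function periods. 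Verifying that these five pieces fit together so that $f(\tau)=\tfrac14\nu(\digamma(i\tau))(|t|+|x'|^4)$ holds as an identity requires non-trivial manipulation of Jacobi theta and Weierstrass identities; this is where the bulk of the work lies, and where Beals--Gaveau--Greiner's techniques \cite{BGGr1,BGGr2,BGGr3,BGGr4} become indispensable.
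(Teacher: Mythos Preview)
The paper does not actually prove this theorem. The sentence immediately preceding Theorem~\ref{thm:step4-2} reads ``We can also compute the lengths of geodesics connecting the origin and the point $(x_1,x_2,t)$. Here is just state the result,'' and after the statement the text moves directly to \S5.1 on the fundamental solution. So there is no argument in the paper to compare your proposal against; the authors are simply recording a formula, presumably deferring the derivation to the references \cite{BGGr1,BGGr2,BGGr3,BGGr4,CCGr} and the forthcoming book \cite{CT}.

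Your sketch follows the natural template suggested by the step~$2$ case (Theorems~\ref{th:Geo}--\ref{th:crt}): set up the Hamiltonian, reduce in polar coordinates using the conserved angular momentum and $\theta$, integrate the radial equation (now via Jacobi elliptic functions with the modulus from Theorem~\ref{t:1}), build the complex action $g$ and the modified distance $f=\tau g$, and read off the length from $f$ at its critical points with the homogeneity forcing the $\ell^4$ scaling. That outline is consistent with the machinery the paper invokes. What your proposal does not --- and at this level of detail cannot --- supply is the actual verification that the specific combination of elliptic, theta, and gamma terms defining $\digamma$ is the one that makes $f(\tau_j)=\tfrac14\nu(\digamma(i\tau_j))(|t|+|x'|^4)$ hold identically; you correctly flag this as ``where the bulk of the work lies.'' Since the paper itself offers no such verification, your proposal is as complete a comparison point as the paper allows.
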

\medskip

\subsection{\bf Fundamental solution for the sub-Laplacian $\Delta_\lambda$ with $\lambda=0$}

Since the defining function for $\partial\Omega_k$ is $\big \{\im(z_2)=|\phi(z_1)|^2=|z_1|^{2k} \big\}\,\subset\, \C^2$ with $k=2,3,\ldots$, thus it is convenient to use polar coordinates 
$(r(\varsigma),\omega(\varsigma))$, $\varsigma \in [0,\tau]$, for the variable $z=x_1+ix_2\, \in \C$. As usual, set 
\[
r^2\,=\, x_1^2+x_2^2,\qquad \omega \,=\, \frac{1}{2i}\log\,\frac{x_1+ix_2}{x_1-ix_2}. 
\]
It follows that $\big(r^2\big)^\cdot \,=\, 2r\dot r\,=\, 2x_1\dot x_1+2x_2\dot x_2$, 
and $r^2\dot \omega \,=\, x_1\dot x_2-x_2\dot x_1$. 
Thus, $\big(r^2\dot\omega \big)^\cdot \,=\, x_1\ddot x_2-x_2\ddot x_1$. Since $\dot x_1=\frac{\partial H}{\partial \xi_1}$ and  
$\dot x_2=\frac{\partial H}{\partial \xi_2}$ , we know that 
\[
\ddot x_1\,=\, -4i\big(r^2\rho^{\prime\prime}(r^2)+\rho^\prime (r^2)\big)\dot x_2,\qquad 
\ddot x_2\,=\, 4i\big(r^2\rho^{\prime\prime}(r^2)+\rho^\prime (r^2)\big)\dot x_1,
\]
where $\rho(v)=v^k$ and $\rho^\prime(v)=kv^{k-1}$. 
This yields 
\begin{eqnarray*}
\big(r^2\dot\omega \big)^\cdot &=& 2i\big(r^2\rho^{\prime\prime}(r^2)+\rho^\prime (r^2)\big)\times \big(2x_1\dot x_1+2x_2\dot x_2\big)\\
&=& 2i\big(r^2\rho^{\prime\prime}(r^2)+\rho^\prime (r^2)\big)\times t\big(r^2\big)^\cdot \, =\, 2i\big(r^2 \rho^\prime (r^2)\big)^\cdot. 
\end{eqnarray*}
Hence, there is a function $\Omega=\Omega(x,x_0,\tau)$, the {\it angular momentum}, constant on the bicharacteristic, such that 
\[
r^2(\varsigma) \dot\omega (\varsigma)\,=\, i\Big(2r^2(\varsigma)  \rho^\prime\big (r^2(\varsigma)\big)-\Omega \Big)\,=\, iW\big (r^2(\varsigma)\big)\Big|_{\varsigma=r^2}. 
\]
It follows that  
\begin{equation}
\label{eq:angular-2}
W (v)\,=\, W(v,\Omega) \,=\, 2v \cdot \rho^\prime(v)-\Omega\,=\, 2kv^k-\Omega. 
 \end{equation}
 Thus 
\begin{eqnarray*}
\big(r \dot r\big)^2+\big( r^2\dot\omega \big)^2  &=& \big(x_1\dot x_1+x_2\dot x_2\big)\big( x_1\dot x_2-x_2\dot x_1\big)^2\\
&=& \big(x_1^2+x_2^2\big)\big( \dot x_1^2+x_2^2\big)\,=\, 2r^2 E.
\end{eqnarray*}
Hence 
 \begin{equation*}
 \Big(\frac 12\big(r^2\big)^\cdot\Big)^2\,=\, 2r^2E-\big( r^2(\varsigma) \dot\phi\big)^2 \,=\, 2r^2(\varsigma) E+W^2(r^2(\varsigma)).
 \end{equation*} 
Letting $v=r^2(\varsigma)$, we obtain
 \begin{equation}
 \label{eq:angular-10}
\frac {dv}{d\varsigma}\,=\, 2\sqrt{2E v+W^2(v)}\qquad \Rightarrow \qquad 
2\, d\varsigma \,=\, \frac{dv}{\sqrt{2E v+W^2(v)}}.
\end{equation} 
Denote $r=r(\tau)$, $r_0=r(0)$, $\omega=\omega(\tau)$ and $\omega_0=\omega(0)$. The function $E$ and $\Omega$ which are constant on each bicharacteristic, and determined implicitly by integrating (\ref{eq:angular-10})
\[
\varsigma\,=\, \int_0^\varsigma dv\,=\, \frac 12\int_{r_0^2}^{r^2(\varsigma)}\frac{dv}{\sqrt{2E v+W^2(v)}},
\]
and
\[
\omega(\varsigma)-\omega_0\,=\, \int_0^\varsigma \dot\omega(v)dv \,=\,
\frac 12\int_{r_0^2}^{r^2(\varsigma)}\frac{W(v) }{\sqrt{2E v+W^2(v)}}\frac {dv}{v}.
\]

On the other hand, from previous calculations, we know that 
\begin{eqnarray*}
\dot x_1\xi_1+\dot x_2\xi_2-H &=& \zeta_1\xi_1+\zeta_2\xi_2-\frac 12\big(\zeta_1^2+\zeta_2^2\big)\\
&=& \frac 12\big(\zeta_1^2+\zeta_2^2\big)+\zeta_1\big(\xi_1-\zeta_1\big)+\zeta_2\big(\xi_2-\zeta_2\big)\\
&=& E+2i\rho^\prime (r^2)\cdot r^2\dot\phi\,=\, E+2\rho^\prime (r^2)\cdot W(r^2).
\end{eqnarray*}
If follows that 
\begin{equation}
\label{eq:action}
\begin{split}
&\int_0^\tau \Big[\sum_{j=1}^2 \dot x_j(\varsigma)\xi_j(\varsigma) -H\big (x(\varsigma),t(\varsigma);\xi(\varsigma),\theta(\varsigma) \big )\Big] \,d\varsigma \\
\,=&\, E\tau +2\int_0^\tau \rho^\prime (r^2(\varsigma))W(r^2(\varsigma))d\varsigma \,=\, E\tau +\int_{|w|^2}^{|z|^2} 
\frac {\rho^\prime (v) W(v)}{\sqrt{2E v+W^2(v)}}dv.
\end{split} 
\end{equation}

It is impossible to calculate $W$ and $\Omega$ explicitly, but we know their analytic properties, and $g$ and $V_\lambda$  may be found in terms of $E$ and $\Omega$.
When $\phi(z)=z^k$, one has $\rho(v)=v^k$ and $\rho^\prime(v)=kv^{k-1}$. Moreover, from (\ref{eq:angular-2}), we know that $W(v)=2kv^k-\Omega$.
Hence, $\rho^\prime(v)\,=\, \frac{1}{2k}W^\prime (v)$. 
In this case, 
\begin{eqnarray*}
\int_{|w|^2}^{|z|^2}  \frac {\rho^\prime (v) W(v)}{\sqrt{2E v+W^2(v)}}\, dv &=&  
\frac{1}{2k}\int_{|w|^2}^{|z|^2} 
\frac {W^\prime (v) W(v)}{\sqrt{2E v+W^2(v)}} \, dv\\
&=& \frac{1}{2k}\cdot \sgn(\tau)\cdot \sqrt{2E v+W^2(v)}\Big|_{v=|w|^2}^{v=|z|^2}-\frac {E\tau}{k}.
\end{eqnarray*}
Combining the above result with (\ref{eq:action}), we have 
\begin{eqnarray*}
g&=&\, -i(t-s)+\Big(1-\frac 1k\Big)E\tau\cr 
&&\,\,\, +\frac {1}{2k}{\mbox {sgn}}(\tau)\Big\{\big(2E|z|^2+W(|z|^2)^2\big)^{\frac 12}-
\big(2E|w|^2+W(|w|^2)^2\big)^{\frac 12}\Big\},
\end{eqnarray*}
where one uses the principal branch of the square roots. We define 
\begin{align}\label{P}
\P\,=\, \frac{2^{\frac 1k}z\bar w}{\big[|z|^{2k}+|w|^{2k}-i(t-t_0)\big]^{\frac 1k}}.
\end{align}
This expression in square brackets has non-negative real part and we take the principal branch of the root. Thus $\P^k=u_+$ band $\bar \P^k=u_-$.
Note that $|\P|\le 1$ with equality only when $|z|=|w|$, $t=t_0$. We also define functions 
\begin{equation}
\label{eq:Fund-000}
\begin{split} 
F^{\pm}_{\lambda,\ell}(\P_+,\P_-)&= \int_0^1\Big\{ \Big(\varsigma^{\frac 1k} \P_\pm\Big)^\ell \varsigma^{-\frac{1+\lambda}{2}}(1-\varsigma)^{-\frac{1-\lambda}{2}} \\
&\qquad \times \big(1-(\P_+\P_-)^k\varsigma\big)^{-\frac{1+\lambda}{2}} \big(1-\P_\pm^k \varsigma\big)^{-1}\Big\} d\varsigma,
\end{split}
\end{equation}
for $\ell=0,1,2,\ldots, k$. These functions are holomorphic functions of their arguments so long as $\P_\pm^k$ and $(\P_+\P_-)^k$ do not belong to the interval $[1,\infty)$. 
\smallskip

\noindent {\bf Remark.} 
The expression (\ref{eq:Fund-000}) 
The functions $F^{\pm}_{\lambda,\ell}(\P_+,\P_-)$ of (\ref{eq:Fund-000}) can be identified as generalized hypergeometric functions of Appell \cite{App} which are real analytic functions of $z$, $\bar z$, $w$, $\bar w$, $t$ and $t_0$ is the region 
 $|\P|<1$, {\it i.e.,} everywhere except $t=t_0$ and $|z|=|w|$. Furthermore, they do not extend smoothly to the boundary but in pairs they do. 
 More precisely, the functions $F^{+}_{\lambda,\ell}+F^{-}_{-\lambda, k-\ell}$, $\ell=0,1,\ldots, k$, extend to an analytic function of  $z$, $\bar z$, $w$, $\bar w$, $t$ and $t_0$ except at the points where $\P_\pm^k=1$. In other words, $z^k=w^k$ and $t=t_0$. This is because 
\begin{equation}
\label{eq:Fund-123}
\begin{split} 
&\Gamma\Big(\frac{1-\lambda}{2}\Big)\Gamma\Big(\frac{1+\lambda}{2}\Big)\big(F^{+}_{\lambda,\ell}+F^{-}_{-\lambda, k-\ell}\big)\cr
=& \int_0^1\int_0^1 \Big\{ \Big(\frac{\varsigma^{-\frac{1-\lambda}{2}}(1-\varsigma+)^{-\frac{1-\lambda}{2}}\sigma^{-\frac{1+\lambda}{2}}
(1-\sigma)^{-\frac{1+\lambda}{2}}}{(1-\P_+^k\varsigma)(1-\P_-^k\sigma)}\cr
&\,\, \times \Big[ \frac{(\varsigma^{1/k}\P_+)^\ell \big(1-(\P_+\P_-(\varsigma\sigma)^{1/k}\big)^{k-\ell} }{1-(\P_+\P_-)^k\varsigma\sigma}\\
&\quad+
\frac{(\sigma^{1/k}\P_-)^{k-\ell }\big(1-(\P_+\P_-(\varsigma\sigma)^{1/k}\big)^\ell }{1-(\P_+\P_-)^k\varsigma\sigma}\Big]\Big\}
\frac{d\varsigma}{\varsigma}
\frac{d\sigma}{\sigma}. 
\end{split}
\end{equation}
The expression in braces is holomorphic in $\P+$ and $\P_-$ for $\P_+\P_-$ close to $1$, {\it i.e.,} $|\P|$ near $1$, and for $\varsigma,\, \sigma \in [0,1]$ because the possible zero of the denominator is cancelled by a zero in the numerator. This proves the argumenmt of the Remark.

Using (\ref{eq:Fund-123}), we have
\begin{equation}
\label{eq:cal-5}
\begin{split}
&K_+(z,w,t-t_0,\lambda)+K_+(\bar z,\bar w,t_0-t,-\lambda) \\
=& \, \frac{1}{4k\pi^2} (A)^{-\frac{1-\lambda}{2}} (\bar A)^{-\frac{1+\lambda}{2}}
\Big \{ \int_0^1 \frac{\big (1-|\P|^{2k} \varsigma\big)^{-\frac{1+\lambda}{2}}}{
\varsigma^{\frac{1+\lambda}{2}} (1-\varsigma)^{\frac{1-\lambda}{2}}}
\frac{d\varsigma}{\big ( 1-\P \varsigma^{\frac{1}{k}}\big )} \\
&\qquad\qquad \qquad\qquad \qquad+ \int_0^1 \frac{\big (1-|\P|^{2k} \varsigma\big)^{-\frac{1-\lambda}{2}}}{
\varsigma^{\frac{1-\lambda}{2}} (1-\varsigma)^{\frac{1+\lambda}{2}}} 
\frac{d\varsigma}{\big( 1-\bar\P \varsigma^{\frac{1}{k}}\big )}\Big\}.
\end{split}
\end{equation}

Summarizing what we discussed above, we have the following formula for $K_{\lambda}$: 
\begin{equation}
\label{eq:Fund-125}
\begin{split}
&K_{\lambda}(z,w,t-t_0)\,\\
& = \frac{1}{8k\pi^2} (A)^{-\frac{1-\lambda}{2}} (\bar A)^{-\frac{1+\lambda}{2}}
\Big \{ \int_0^1 \frac{\big(1-|\P|^{2k}\varsigma\big)^{-\frac{1+\lambda}{2}}}{
\varsigma^{\frac{1+\lambda}{2}} (1-\varsigma)^{\frac{1-\lambda}{2}}}
\frac{1+\P \varsigma^{\frac{1}{k}}}{1-\P \varsigma^{\frac{1}{k}}}d\varsigma  \\
&\qquad+\int_0^1 \frac{\big (1-|\P|^{2k} \varsigma\big )^{-\frac{1+\lambda}{2}} }{
\varsigma^{\frac{1-\lambda}{2}}(1-\varsigma)^{\frac{1+\lambda}{2}}} 
\frac{1+\bar \P \varsigma^{\frac{1}{k}}}{1-\bar \P \varsigma^{\frac{1}{k}}}d\varsigma\Big \},
\end{split}
\end{equation}
for $-1<\re(\lambda)<1$. Here
\begin{equation}
\label{eq:func-A}
A\, =\,\frac 12\Big (|z|^{2k}+|w|^{2k}+i(t-t_0)\Big),
\end{equation}
and
\[
\P= \frac{\bar z\cdot w}{A^{\frac{1}{k}}},
\quad{\mbox {if}}\quad w\ne 0  \qquad {\mbox {and}}\qquad
\P=0, \quad{\mbox {if}}\quad w= 0. \qquad\blacksquare
\]
\medskip

\subsection{\bf Fundamental solution $K_{\lambda}$ for the Kohn Laplacian $\Delta_{\lambda}$ when $|\re(\lambda)|<1$}
Let us start with a well-known identity: 
\begin{equation}
\label{eq:basic-1}
\frac{1}{(1-w)^\beta}=F(1,\beta,1,w)=\frac{1}{\Gamma(\beta)\Gamma(1-\beta)} 
\int_0^1 \frac{\varsigma^{\beta-1}}{(1-\varsigma)^\beta(1-w\varsigma)}d\varsigma
\end{equation}
for $\beta\notin \Z$. Hence we have
\begin{equation*}
\begin{split}
&\Gamma\big (\frac{1+\lambda}{2}\big ) \Gamma\big (\frac{1-\lambda}{2}\big )
 \big (1-|\P|^{2k}\varsigma\big )^{-\frac{1+\lambda}{2}}\\
=&\, \int_0^1 \sigma^{-\frac{1-\lambda}{2}}
(1-\sigma)^{-\frac{1+\lambda}{2}}\big(1-|\P|^{2k}\varsigma\sigma
\big )^{-1}d\sigma.
\end{split}
\end{equation*}
It follows that
\begin{equation}
\label{eq:cal-6} \begin{split} &\int_0^1 \frac {
\varsigma^{-\frac{1+\lambda}{2}} (1-\varsigma)^{-\frac{1-\lambda}{2}}}{
\big (1-|\P|^{2k}\varsigma \big )^{-\frac{1+\lambda}{2}}\big
(1-|\P|^{2k}\varsigma \big )^{\frac{1+\lambda}{2}}}
\frac{1+\P \varsigma^{\frac{1}{k}}}{1- \P \varsigma^{\frac{1}{k}}} d\varsigma \\
=&\, \frac{1}{\Gamma\big (\frac{1+\lambda}{2}\big ) \Gamma\big(\frac{1-\lambda}{2}\big )}
\int_0^1\frac{d\sigma}{\sigma^{\frac{1-\lambda}{2}}
(1-\sigma)^{\frac{1+\lambda}{2}}}\\
&\,\,\times\int_0^1\frac{ \varsigma^{-\frac{1+\lambda}{2}}
(1-\varsigma)^{-\frac{1-\lambda}{2}}}{1-|\P|^{2k}\varsigma\sigma }
\frac{1+\P \varsigma^{\frac{1}{k}}}{1- \P \varsigma^{\frac{1}{k}}}d\varsigma.
\end{split}
\end{equation}
Similarly, we have
\begin{equation}
\label{eq:cal-7} 
\begin{split} &\int_0^1 \frac{
\varsigma^{-\frac{1-\lambda}{2}} (1-\varsigma)^{-\frac{1+\lambda}{2}}}{
\big (1-|\P|^{2k}\varsigma \big )^{-\frac{1-\lambda}{2}}\big
(1-|\P|^{2k}\varsigma \big )^{\frac{1+\lambda}{2}}} \cdot
\frac{1+\bar \P \varsigma^{\frac{1}{k}}}{1- \bar \P \varsigma^\frac{1}{k}} d\varsigma \\
=&\, \frac{1}{\Gamma\big (\frac{1+\lambda}{2}\big ) \Gamma\big
(\frac{1-\lambda}{2}\big )}
\int_0^1\frac{d\sigma}{\sigma^{\frac{1-\lambda}{2}}
(1-\sigma)^{\frac{1+\lambda}{2}}}\\
&\quad\times \int_0^1\frac{ \varsigma^{-\frac{1+\lambda}{2}}
(1-\varsigma)^{-\frac{1-\lambda}{2}}}{1-|\P|^{2k}\varsigma\sigma }
\frac{1+\bar \P \varsigma^\frac{1}{k}}{1- \bar \P \varsigma^\frac{1}{k}}d\varsigma.
\end{split}
\end{equation}
Thus, the sum of~\eqref{eq:cal-6} and~\eqref{eq:cal-7} gives us
\begin{equation*}
\begin{split}
& \frac{2}{\Gamma\big (\frac{1+\lambda}{2}\big ) \Gamma\big
(\frac{1-\lambda}{2}\big )} \int_0^1\int_0^1
\varsigma^{-\frac{1+\lambda}{2}} (1-\varsigma)^{-\frac{1-\lambda}{2}} 
\sigma^{-\frac{1-\lambda}{2}}
(1-\sigma)^{-\frac{1+\lambda}{2}}\\
&\,\, \times \prod_{\ell=1}^{k-1} 
\Big (1-e^{\frac{2\ell\pi}{k}i}|\P|^2(\varsigma\sigma)^\frac{1}{k}\Big )^{-1} 
\frac{d\varsigma d\sigma}{(1-\P \varsigma^\frac{1}{k})(1-\bar \P \sigma^\frac{1}{k})}.
\end{split}
\end{equation*}

From the above and formula (\ref{eq:Fund-125}), we obtain for $k>1$,
\begin{equation}
\label{eq:cal-8}
\begin{split}
K_{\lambda} =&\, \frac{1}{4k\pi^2} 
\frac{(A)^{-\frac{1-\lambda}{2}}(\bar A)^{-\frac{1+\lambda}{2}} }{
\Gamma\big (\frac{1+\lambda}{2}\big ) \Gamma\big (\frac{1-\lambda}{2}\big )} 
\int_0^1 \frac{\varsigma^{-\frac{1+\lambda}{2}} (1-\varsigma)^{-\frac{1-\lambda}{2}}}{
1-\P \varsigma^\frac{1}{k}}d\varsigma\\
&\,\,\times \int_0^1 \prod_{\ell=1}^{k-1} \Big
(1-e^{\frac{2\ell\pi}{k}i}|\P|^2(\varsigma\sigma)^\frac{1}{k}\Big
)^{-1}\, \frac{\sigma^{-\frac{1-\lambda}{2}}
(1-\sigma)^{-\frac{1+\lambda}{2}}} {1-\bar \P \sigma^{\frac{1}{k}}}d\sigma,
\end{split}
\end{equation}
and for $k=1$,
\begin{equation}
\label{eq:cal-9}
K_{\lambda} =\, \frac{1}{4\pi^2} 
\frac{(A)^{-\frac{1-\lambda}{2}}(\bar A)^{-\frac{1+\lambda}{2}} }{
\Gamma\big (\frac{1+\lambda}{2}\big ) \Gamma\big (\frac{1-\lambda}{2}\big )} 
\int_0^1 \frac{\varsigma^{-\frac{1+\lambda}{2}} (1-\varsigma)^{-\frac{1-\lambda}{2}}}{
1-\P \varsigma}d\varsigma
\int_0^1  \frac{\sigma^{-\frac{1-\lambda}{2}}
(1-\sigma)^{-\frac{1+\lambda}{2}}}{1-\bar \P \sigma}d\sigma
\end{equation}

Now we may state our result as following theorem.
\medskip
\begin{theorem}
\label{thm:fund-100} Assume $(z,t)\ne(w,t_0)$. Then the
fundamental solution $K_{\lambda}$ for the sub-Laplacian
$\Delta_{\lambda}$ is given by~\eqref{eq:cal-8}. In particular,
when $k=1$, $K_{\lambda}$ is given by~\eqref{eq:cal-9}.
\end{theorem}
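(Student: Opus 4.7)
The plan is to build $K_\lambda$ from the ansatz
\begin{equation*}
K_\lambda(\x,\x_0) \,=\, \int_\R \frac{E(\x,\x_0,\tau)\,V_\lambda(\x,\x_0,\tau)}{g(\x,\x_0,\tau)}\, d\tau,
\end{equation*}
where $g$ is a modified complex action satisfying the Hamilton--Jacobi equation, $E = -\partial g/\partial\tau$ is the conserved energy along bicharacteristics, and $V_\lambda$ is the transport volume element. The goal is to compute these ingredients explicitly in the model case $\phi(z) = z^k$ and then evaluate the $\tau$-integral in closed form.

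First I would use polar coordinates $(r,\omega)$ for $z = x_1 + i x_2$ and introduce the angular momentum $\Omega$, obtaining $r^2\dot\omega = iW(r^2)$ with $W(v) = 2kv^k - \Omega$ as in (\ref{eq:angular-2}). Combined with the radial equation (\ref{eq:angular-10}), this yields the action formula (\ref{eq:action}) along a bicharacteristic. The crucial simplification is that $\rho'(v) = \frac{1}{2k} W'(v)$ when $\rho(v) = v^k$, so the antiderivative evaluates explicitly in terms of $\sqrt{2Ev + W^2(v)}$ at the endpoints $v = |w|^2$ and $v = |z|^2$, producing the closed form for $g$ displayed just before the definition of $\P$. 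Introducing $A$ as in (\ref{eq:func-A}) and $\P = \bar z w / A^{1/k}$ then rewrites $g$ as a function of the single complex variable $\P$ together with $A$.

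Next, the transport equation for $V_\lambda$ forces a factor $e^{-2\lambda\tau}$ times a fractional power of $A$; a change of variable sending $\tau \mapsto \varsigma$ converts the $\tau$-integral into the hypergeometric-type integrals $F^\pm_{\lambda,\ell}(\P_+,\P_-)$ of (\ref{eq:Fund-000}). Each such integral is singular on $|\P|=1$, but the singular pieces cancel when symmetrized, as codified by identity (\ref{eq:Fund-123}) for $F^+_{\lambda,\ell} + F^-_{-\lambda,k-\ell}$. Summing the two branches yields (\ref{eq:cal-5}), and an elementary rearrangement using $2/(1-\P\varsigma^{1/k}) = 1 + (1+\P\varsigma^{1/k})/(1-\P\varsigma^{1/k})$, together with the reflection $\lambda \leftrightarrow -\lambda$ on the conjugate piece, recasts $K_\lambda$ in the balanced form (\ref{eq:Fund-125}).

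Finally, I would invoke the Beta-type identity (\ref{eq:basic-1}) with $\beta = (1+\lambda)/2$ to represent $(1-|\P|^{2k}\varsigma)^{-(1+\lambda)/2}$ as an integral in a new variable $\sigma$. Substituting this into (\ref{eq:Fund-125}) almost decouples the two halves into a product of single-variable integrals in $\varsigma$ and $\sigma$, except for the coupling factor $(1 - |\P|^{2k}\varsigma\sigma)^{-1}$. The main obstacle is the cyclotomic rearrangement: I would use the factorization
\begin{equation*}
1 - |\P|^{2k}\varsigma\sigma \,=\, \prod_{\ell=0}^{k-1}\bigl(1 - e^{2\ell\pi i/k}\,|\P|^2\,(\varsigma\sigma)^{1/k}\bigr),
\end{equation*}
peel off the $\ell = 0$ factor to recombine with $1 - \P\varsigma^{1/k}$ and $1 - \bar\P\sigma^{1/k}$, and thereby obtain the product over $\ell = 1,\ldots,k-1$ that appears in (\ref{eq:cal-8}); for $k=1$ the product is empty and (\ref{eq:cal-9}) falls out of (\ref{eq:Fund-125}) directly. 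The restriction $-1 < \re(\lambda) < 1$ keeps the Gamma functions and the endpoint behavior at $\varsigma,\sigma \in \{0,1\}$ under control, and the final verification that $\Delta_\lambda K_\lambda = \delta$ can be done either by direct differentiation exploiting the Hamilton--Jacobi property of $g$ or by analytic continuation in $k$ from the step-$2$ Folland--Stein formula (\ref{eq:FS}).
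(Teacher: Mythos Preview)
Your proposal is correct and follows essentially the same route as the paper: the Hamilton--Jacobi/transport derivation leading to $g$ and the $F^\pm_{\lambda,\ell}$, the symmetrization \eqref{eq:Fund-123}--\eqref{eq:cal-5} producing \eqref{eq:Fund-125}, and then the Beta identity \eqref{eq:basic-1} together with the cyclotomic factorization to reach \eqref{eq:cal-8}--\eqref{eq:cal-9}. One small correction to your description of the last step: after adding the two fractions you get a numerator $2\bigl(1-|\P|^2(\varsigma\sigma)^{1/k}\bigr)$ over $(1-\P\varsigma^{1/k})(1-\bar\P\sigma^{1/k})(1-|\P|^{2k}\varsigma\sigma)$, and it is this numerator that cancels the $\ell=0$ factor of the cyclotomic product, leaving the product over $\ell=1,\dots,k-1$ alongside the two simple denominators---they do not ``recombine'' with the $\ell=0$ factor.
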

\smallskip

\noindent {\bf Remarks.} 
\smallskip

\noindent $(1)$. In fact, from what we have discussed, we know that $|\P|\le 1$ and $\P=1$ if and only if $(z,t)=(w,t_0)$.
Therefore, it is easy to see that $K_{\lambda}$ has a unique
singularity at $(w,t_0)$. Moreover, it is not difficult to show
that $K_{\lambda} \in L^1_{loc}(\R^3)$ and $\Delta_{\lambda}
K_{\lambda}=\delta(z,w,t-t_0)$. We omit the detail here.
\smallskip

\noindent $(2)$. The integrand in (\ref{eq:cal-8}) is real analytic in $\P$ and $\bar \P$ in the region
$$
\mathcal W\,=\, \Big \{ |\P|\le 1,\,\,  \, \P \not=1, \, \bar \P \not=1, \Big\}
$$
when $\varsigma,\, \sigma \in [0,1]$. Moreover, 
\begin{equation*}
F_{\lambda}(0,0)\,=\, \frac{\mathcal B\big(\frac{1-\lambda}{2}, \frac{1+\lambda}{2}\big)\mathcal B\big(\frac{1+\lambda}{2}, \frac{1-\lambda}{2}\big)}
{\Gamma\big (\frac{1-\lambda}{2}\big )\Gamma\big (\frac{1+\lambda}{2}\big )}
\,=\, \Gamma\big (\frac{1-\lambda}{2}\big )\Gamma\big (\frac{1+\lambda}{2}\big ),
\end{equation*} 
where $\mathcal B(\cdot,\cdot )$ denotes the beta function. 
\smallskip 

From Theorem \ref{thm:fund-100}, we obtain the fundamental solution for the operator $\Delta_{0}$ as a corollary.
\begin{corollary}
\label{cor:fund-101} Assume $(z,t)\ne(w,t_0)$. Then the
fundamental solution $K_{0}$ for the sub-Laplacian
$\Delta_{0,k}$ has the following closed form:
\begin{equation}
\label{eq:cal-10}
\begin{split}
K_{0} =&\, \frac{1}{4k\pi^3|A|} \int_0^1\int_0^1
\varsigma^{-\half} (1-\varsigma)^{-\half} \sigma^{-\half}
(1-\sigma)^{-\half}\\
&\,\, \times \prod_{\ell=1}^{k-1} \Big (1-e^{\frac{2\ell\pi}{k}i}|\P|^2(\varsigma\sigma)^\frac{1}{k}\Big )^{-1} 
\frac{d\varsigma d\sigma}{(1-\P \varsigma^\frac{1}{k})(1-\bar \P \sigma^\frac{1}{k})}.
\end{split}
\end{equation}
\end{corollary}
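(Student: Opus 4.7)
The proof plan is essentially to specialize Theorem~\ref{thm:fund-100} at $\lambda=0$ and simplify. First I would take the expression~\eqref{eq:cal-8} and evaluate each piece in turn at $\lambda=0$. The exponents $\frac{1-\lambda}{2}$ and $\frac{1+\lambda}{2}$ both collapse to $\frac{1}{2}$, so the weight $\varsigma^{-\frac{1+\lambda}{2}}(1-\varsigma)^{-\frac{1-\lambda}{2}}$ becomes $\varsigma^{-1/2}(1-\varsigma)^{-1/2}$, and similarly for the $\sigma$-integrand. The product factor $\prod_{\ell=1}^{k-1}\bigl(1-e^{2\ell\pi i/k}|\P|^{2}(\varsigma\sigma)^{1/k}\bigr)^{-1}$ does not depend on $\lambda$, so it is unchanged.

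Second, I would handle the prefactor. Since $A=\frac{1}{2}\bigl(|z|^{2k}+|w|^{2k}+i(t-t_0)\bigr)$ has non-negative real part, the principal branch satisfies $A^{1/2}\bar A^{1/2}=|A|$, hence
\begin{equation*}
(A)^{-\frac{1-\lambda}{2}}(\bar A)^{-\frac{1+\lambda}{2}}\Big|_{\lambda=0}=A^{-1/2}\bar A^{-1/2}=\frac{1}{|A|}.
\end{equation*}
The Gamma factor in the denominator collapses via $\Gamma(\tfrac12)\Gamma(\tfrac12)=\pi$, so the constant in front becomes
\begin{equation*}
\frac{1}{4k\pi^{2}}\cdot\frac{1}{\pi}\cdot\frac{1}{|A|}=\frac{1}{4k\pi^{3}|A|},
\end{equation*}
which matches the prefactor in~\eqref{eq:cal-10}.

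Third, I would combine the $\varsigma$- and $\sigma$-integrals. In~\eqref{eq:cal-8} these are written as a product of two single integrals, but at $\lambda=0$ the integrand of each single integral is independent of the other variable except through the product factor $\prod_{\ell=1}^{k-1}(1-e^{2\ell\pi i/k}|\P|^{2}(\varsigma\sigma)^{1/k})^{-1}$ (which appeared only under the $\sigma$ integral in~\eqref{eq:cal-8} but depends on both). So, writing the product of integrals as a double integral by Fubini, we obtain exactly
\begin{equation*}
\int_{0}^{1}\!\int_{0}^{1}\varsigma^{-1/2}(1-\varsigma)^{-1/2}\sigma^{-1/2}(1-\sigma)^{-1/2}\prod_{\ell=1}^{k-1}\!\bigl(1-e^{2\ell\pi i/k}|\P|^{2}(\varsigma\sigma)^{1/k}\bigr)^{-1}\frac{d\varsigma\,d\sigma}{(1-\P\varsigma^{1/k})(1-\bar\P\sigma^{1/k})},
\end{equation*}
which reproduces the right-hand side of~\eqref{eq:cal-10}.

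Since the entire argument is a direct substitution $\lambda=0$ into a formula that has already been proved, there is no real obstacle; the only small care needed is the principal-branch identification $A^{-1/2}\bar A^{-1/2}=|A|^{-1}$, which is valid because $\re A\ge 0$ in our domain. No further convergence issues arise at $\lambda=0$ since $\Gamma(1/2)$ is finite and the integrals with Jacobi-type weights $\varsigma^{-1/2}(1-\varsigma)^{-1/2}$ are absolutely convergent on $[0,1]$.
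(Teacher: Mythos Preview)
Your proposal is correct and follows exactly the approach the paper intends: the paper states the corollary immediately after Theorem~\ref{thm:fund-100} with the phrase ``From Theorem~\ref{thm:fund-100}, we obtain the fundamental solution for the operator $\Delta_{0}$ as a corollary'' and gives no further argument, so the intended proof is precisely the direct substitution $\lambda=0$ into~\eqref{eq:cal-8} that you carry out. Your handling of the prefactor via $\Gamma(\tfrac12)^2=\pi$ and $A^{-1/2}\bar A^{-1/2}=|A|^{-1}$, and your observation that the iterated integral in~\eqref{eq:cal-8} is really a double integral (since the product over $\ell$ couples $\varsigma$ and $\sigma$), are exactly the details the paper suppresses.
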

\medskip

\section{\bf The Cauchy--Szeg\"o kernel on $\Omega_k$ and $\partial\Omega_k$}

Let $f(\bz,z_{n+1}) \in H^2(\Omega_k)$. Here $H^2(\Omega_k)$ is the space of all square integrable
holomorphic functions on $\Omega_k$. Then one has
\[
f(\bz,z_{n+1})=\int_0^\infty e^{2\pi i\lambda z_{n+1}}\widetilde  f(\bz, \lambda)
d\lambda
\]
where $\widetilde  f(\bz,\lambda)$ is the Fourier transform of $f$
with respect to the $z_{n+1}$ variable. The ``height function" on
$\Omega_k$ can be written as
\[
\rho=\im(z_{n+1})-\left (\sum_{j=1}^n|z_j|^2\right)^k=
\im(z_{n+1})-|\bz|^{2k},
\]
where $|\bz|^{2}=\sum_{j=1}^n|z_j|^2$.
 Then
\begin{equation}
\label{eq:1} f(\bz,z_{n+1})=\int_0^\infty e^{2\pi
i\lambda(\re(z_{n+1}))-2\pi\lambda (\rho+|\bz|^{2k})}\widetilde
f(\bz, \lambda) d\lambda.
\end{equation}
According to Plancherel's formula, we know that
\[
e^{-2\pi\lambda (\rho+|\bz|^{2k})}\widetilde  f(\bz, \lambda)=
\int_{-\infty}^{+\infty} e^{2\pi
i\lambda(\re(z_{n+1}))}f(\bz,z_{n+1})d(\re (z_{n+1})).
\]
This implies that
\begin{equation}
\label{eq:2} \widetilde  f(\bz,\lambda)=\int_{-\infty}^{+\infty}
e^{-2\pi i\lambda \bar z_{n+1}}f(\bz,z_{n+1})d(\re (z_{n+1})).
\end{equation}
The definition of $H^2(\Omega_k)$ and~\eqref{eq:1} imply that
\[
\int_{\C^n}\int_0^\infty e^{-4\pi\lambda \left (\sum_{j=1}^n|z_j|^2\right)^k}
|\widetilde  f(\bz, \lambda)|^2
d\lambda dV(\bz)<\infty,
\]
where $dV(\bz)=dz_1 d\bar z_1\cdots dz_n d\bar z_n$. Let
$A_\lambda$ be the Bergman space of holomorphic functions in
$\C^n$ with the norm
\[
\|g\|_{A_\lambda(\Omega_k)}=\left \{ \int_{\C^n} e^{-4\pi
\lambda|\bz|^{2k}}|g(\bz)|^2 dV(\bz)\right\}^{1\over 2}<\infty.
\]
Then $A_\lambda$ has a reproducing kernel $S_\lambda(\bz,\bw)$, {\it i.e.,}
for $g\in A_\lambda$, one has
\[
g(\bz)=\int_{\C^n} S_\lambda(\bz,\bw)dV(\bw).
\]
In particular,
\[
\widetilde  f(\bz,\lambda)=\int_{\C^n} S_\lambda(\bz,\bw)\widetilde  f(\bw,\lambda) dV(\bw),
\]
whenever $f\in H^2(\Omega_k)$. Therefore, such an $f$ has the following
integral representation:
\[
f(\bz,z_{n+1})=\int_0^\infty e^{2\pi i\lambda z_{n+1}}d\lambda
\int_{\C^n} S_\lambda(\bz,\bw)\widetilde  f(\bw,\lambda) dV(\bw).
\]
Substituting~\eqref{eq:2} for $\widetilde  f(\bw,\lambda)$, we
obtain
\[
f(\bz,z_{n+1})=\int_{\partial \Omega_k} S(\bz,z_{n+1};\bw, w_{n+1})
f(\bw,w_{n+1}) dV(\bw) d(\re (w_{n+1})).
\]
Denote
\[
A={i\over 2}(\bar w_{n+1}-z_{n+1}),
\]
and
\[
S(\bz,z_{n+1};\bw, w_{n+1})=\int_0^\infty e^{-4\pi \lambda A} S_\lambda(\bz,\bw)d\lambda.
\]
Then we have the following lemma.
\begin{lemma} Let $U$ be a unitary transform on $\C^n$. If $S(\bz,z_{n+1};\bw, w_{n+1})$
is the Cauchy--Szeg\"o kernel on $\Omega_k$, then
\[
S(U(\bz),z_{n+1};U(\bw), w_{n+1})=S(\bz,z_{n+1};\bw, w_{n+1}).
\]
\end{lemma}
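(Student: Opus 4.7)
The plan is to reduce the invariance of $S$ to the invariance of the weighted Bergman reproducing kernel $S_\lambda(\bz,\bw)$ on $A_\lambda$, and then use the explicit formula
\[
S(\bz,z_{n+1};\bw,w_{n+1})=\int_0^\infty e^{-4\pi\lambda A}\,S_\lambda(\bz,\bw)\,d\lambda,
\qquad A=\tfrac{i}{2}(\bar w_{n+1}-z_{n+1}),
\]
to transfer the invariance to $S$. The key observation is that $A$ depends only on $z_{n+1}$ and $w_{n+1}$, so it is untouched by a unitary change of variables in the $\bz$-variables; hence it suffices to prove $S_\lambda(U(\bz),U(\bw))=S_\lambda(\bz,\bw)$ for every $\lambda>0$ and every $U\in U(n)$.

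For the unitary invariance of $S_\lambda$, I would first observe that the defining weight $e^{-4\pi\lambda|\bz|^{2k}}$ depends only on $|\bz|^2$, and that the Lebesgue volume element $dV(\bz)$ is preserved by $U$. Consequently the map $g\mapsto g\circ U$ is a linear isometry of $A_\lambda$ onto itself. Then, for any $g\in A_\lambda$, I would apply the reproducing identity to $g\circ U$ at the point $\bz$:
\[
g(U(\bz))=(g\circ U)(\bz)=\int_{\C^n} S_\lambda(\bz,\bw)\,(g\circ U)(\bw)\,dV(\bw),
\]
and then change variables $\bw\mapsto U^{-1}(\bw)$ (which preserves $dV$ and is a bijection of $\C^n$) to rewrite the integral as
\[
g(U(\bz))=\int_{\C^n} S_\lambda(\bz,U^{-1}(\bw))\,g(\bw)\,dV(\bw).
\]
Since by definition $g(U(\bz))=\int_{\C^n} S_\lambda(U(\bz),\bw)\,g(\bw)\,dV(\bw)$, uniqueness of the reproducing kernel in $A_\lambda$ forces
\[
S_\lambda(U(\bz),\bw)=S_\lambda(\bz,U^{-1}(\bw)).
\]
Applying the same argument with $U$ replaced by $U^{-1}$ on the second slot (or using the Hermitian symmetry $S_\lambda(\bz,\bw)=\overline{S_\lambda(\bw,\bz)}$), I obtain the full two-sided invariance $S_\lambda(U(\bz),U(\bw))=S_\lambda(\bz,\bw)$.

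Plugging this into the integral representation of $S$ and using that $A=\frac{i}{2}(\bar w_{n+1}-z_{n+1})$ is independent of $\bz$ and $\bw$ yields
\[
S(U(\bz),z_{n+1};U(\bw),w_{n+1})=\int_0^\infty e^{-4\pi\lambda A}\,S_\lambda(U(\bz),U(\bw))\,d\lambda
=\int_0^\infty e^{-4\pi\lambda A}\,S_\lambda(\bz,\bw)\,d\lambda,
\]
which equals $S(\bz,z_{n+1};\bw,w_{n+1})$, as desired. The only subtlety worth being careful about is the uniqueness step for the reproducing kernel and the measurability of the change of variables under $U$; both are standard, so I do not expect any real obstacle in the argument.
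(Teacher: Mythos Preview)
Your argument is correct. The paper's proof, however, takes a more direct path: rather than descending to the weighted Bergman kernels $S_\lambda$ and then reassembling via the integral in $\lambda$, it applies the reproducing-kernel uniqueness argument directly to $S$ on $H^2(\Omega_k)$. Concretely, the paper observes that for any $f\in H^2(\Omega_k)$ the composition $f\circ U^{-1}$ also lies in $H^2(\Omega_k)$ (since the defining function $\mathrm{Im}(z_{n+1})-|\bz|^{2k}$ is invariant under $\bz\mapsto U(\bz)$), performs the change of variables $\bw\mapsto U(\bw)$ in the boundary integral, and reads off the identity from the reproducing property at the point $(U(\bz),z_{n+1})$. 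Your route via $S_\lambda$ is perfectly valid and uses exactly the same mechanism (unitary invariance of a radial weight $\Rightarrow$ isometry $\Rightarrow$ uniqueness of the reproducing kernel), just one level down in the Fourier decomposition; it has the minor cost of invoking the integral representation $S=\int_0^\infty e^{-4\pi\lambda A}S_\lambda\,d\lambda$, whereas the paper's argument needs nothing beyond the defining reproducing property of $S$ itself.
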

\medskip
\begin{proof} Let $f\in H^2(\Omega_k)$. Then $f\circ U^{-1}\in H^2(\Omega_k)$
for every unitary transform $U$ and
\begin{eqnarray*}
&&\int_{\partial \Omega_k}S(U(\bz),z_{n+1};U(\bw), w_{n+1}) f(\bw,w_{n+1})
dV(\bw) d(\re (w_{n+1}))\\
&&= \int_{\partial \Omega_k}S(U(\bz),z_{n+1};\bw, w_{n+1}) f(U^{-1}(\bw),w_{n+1})
dV(\bw) d(\re (w_{n+1}))\\
&&=f(U^{-1}(\cdot),z_{n+1})\big|_{U(\bz)}
= f(\bz,z_{n+1})\\
&&= \int_{\partial \Omega_k}S(\bz,z_{n+1};\bw, w_{n+1}) f(\bw,w_{n+1})
dV(\bw) d(\re (w_{n+1})).
\end{eqnarray*}
The lemma follows immediately.
\end{proof}

\begin{theorem}\label{thm CS kernel}
The explicit Cauchy--Szeg\"o kernel on $\Omega_k$ is as follows. 
\begin{eqnarray*}
&&S(\bz,z_{n+1};\bw, w_{n+1})\\
&&={n!\over 4\pi^{n+1}}{1 \over \left (A^{1\over k}-
\sum_{j=1}^n z_j\bar w_j\right)^{n+1}A^{k-1\over k} }\\
&&= {n!\over 4\pi^{n+1}} {\left [{i\over 2}(s-t)+{1\over 2}(|\bz|^{2k}+|\bw|^{2k})
+{1\over 2}(\rho+\mu)\right ]^{1-k\over k}\over
\left \{ \left [ {1\over 2}(|\bz|^{2k}+|\bw|^{2k})-{i\over 2}(t-s)+ {1\over 2} (\rho+\mu)\right ]^{1\over k}- \sum_{j=1}^n z_j\bar w_j\right \}^{n+1}}.
\end{eqnarray*}
\end{theorem}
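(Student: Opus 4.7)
The plan is to compute the explicit formula by carrying out the two-step construction already set up in Section 6: first find a closed form for the weighted Bergman kernel $S_\lambda(\bz,\bw)$ of the space $A_\lambda$ on $\C^n$, then evaluate the Laplace-type transform
\[
S(\bz,z_{n+1};\bw,w_{n+1}) \,=\, \int_0^\infty e^{-4\pi\lambda A}\,S_\lambda(\bz,\bw)\,d\lambda, \qquad A=\tfrac{i}{2}(\bar w_{n+1}-z_{n+1}).
\]
The key structural input is that the weight $e^{-4\pi\lambda |\bz|^{2k}}$ is invariant under the full unitary group $U(n)$ and, in particular, under the diagonal torus action $z_j\mapsto e^{i\theta_j}z_j$. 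This forces the monomials $\{\bz^{\alpha}\}_{\alpha\in\Z_+^{n}}$ to be pairwise orthogonal in $A_\lambda$; polynomial density in $A_\lambda$ (standard for radial weights) then makes them an orthogonal basis, so
\[
S_\lambda(\bz,\bw)\,=\,\sum_{\alpha}\frac{\bz^{\alpha}\overline{\bw^{\alpha}}}{c_\alpha(\lambda)}, \qquad c_\alpha(\lambda):=\|\bz^{\alpha}\|^2_{A_\lambda}.
\]

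To compute $c_\alpha(\lambda)$, I pass to spherical coordinates on $\C^n\cong\R^{2n}$ and use the well-known identity $\int_{\mathbb{S}^{2n-1}}|\omega^{\alpha}|^2\,d\sigma=\frac{2\pi^n\alpha!}{\Gamma(n+|\alpha|)}$ together with the radial substitution $u=4\pi\lambda r^{2k}$, which turns the radial integral into a Gamma function. This gives
\[
c_\alpha(\lambda)\,=\,\frac{\pi^n\alpha!\,\Gamma\!\big(\tfrac{n+|\alpha|}{k}\big)}{k\,\Gamma(n+|\alpha|)}\,(4\pi\lambda)^{-(n+|\alpha|)/k}.
\]
Collecting monomials by total degree $m=|\alpha|$ and applying the multinomial identity $\sum_{|\alpha|=m}\bz^{\alpha}\overline{\bw^{\alpha}}/\alpha!=\langle\bz,\bw\rangle^{m}/m!$, with $\langle\bz,\bw\rangle=\sum_{j=1}^{n}z_j\bar w_j$, I obtain
\[
S_\lambda(\bz,\bw)\,=\,\frac{k}{\pi^n}\sum_{m=0}^{\infty}\frac{(n+m-1)!}{m!\,\Gamma((n+m)/k)}\,(4\pi\lambda)^{(n+m)/k}\,\langle\bz,\bw\rangle^{m}.
\]

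Next I insert this into the definition of $S$, interchange sum and integral (justified by dominated convergence via factorial asymptotics for $(z,z_{n+1}),(w,w_{n+1})\in\Omega_k$), and evaluate each $\lambda$-integral by $\int_0^\infty e^{-4\pi\lambda A}(4\pi\lambda)^{s}\,d\lambda=\Gamma(s+1)/(4\pi A^{s+1})$. The Gamma ratio $\Gamma(\tfrac{n+m}{k}+1)/\Gamma(\tfrac{n+m}{k})=(n+m)/k$ cancels the factor $k$ and converts $(n+m-1)!(n+m)=(n+m)!$, leaving
\[
S\,=\,\frac{1}{4\pi^{n+1}A^{(n+k)/k}}\sum_{m=0}^{\infty}\frac{(n+m)!}{m!}\Big(\frac{\langle\bz,\bw\rangle}{A^{1/k}}\Big)^{m}.
\]
The binomial identity $\sum_{m\ge 0}\tfrac{(n+m)!}{m!}x^m=n!/(1-x)^{n+1}$ then yields the first displayed form of the theorem. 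The second form is obtained by substituting $z_{n+1}=t+i(\rho+|\bz|^{2k})$ and $w_{n+1}=s+i(\mu+|\bw|^{2k})$ into $A=\tfrac{i}{2}(\bar w_{n+1}-z_{n+1})$, which gives exactly the bracketed expression appearing in the theorem.

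The main obstacle is the analytic bookkeeping that makes the above manipulations rigorous and branch-consistent. First, one must verify that $\re A>0$ for $(z,z_{n+1}),(w,w_{n+1})\in\Omega_k$ (this is where $\rho,\mu\ge 0$ is used), so that the principal branch of $A^{1/k}$ is well-defined and coincides with the root implicit in the series manipulation. Second, the binomial series converges precisely when $|\langle\bz,\bw\rangle|<|A^{1/k}|$, an inequality of Cauchy--Schwarz type that must be checked from the defining inequality of $\Omega_k$; on the boundary this becomes an equality only on the diagonal $(\bz,z_{n+1})=(\bw,w_{n+1})$, which accounts for the lone singularity of the kernel. Finally, the completeness of $\{\bz^{\alpha}\}$ in $A_\lambda$ needs a brief justification via polynomial approximation against the rapidly decaying weight $e^{-4\pi\lambda|\bz|^{2k}}$; once these three ingredients are secured, the computation above produces the stated closed form directly.
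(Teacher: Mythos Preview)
Your proposal is correct and follows essentially the same route as the paper: compute the monomial norms in $A_\lambda$ via spherical coordinates and the substitution $u=4\pi\lambda r^{2k}$, perform the Laplace transform in $\lambda$ term by term, and sum the resulting binomial series $\sum_{m\ge 0}\frac{(n+m)!}{m!}x^m=n!(1-x)^{-n-1}$. The only organizational difference is that the paper first invokes the unitary-invariance lemma to reduce to the north pole $\bw={\bf 1}$ (so that only the monomials $z_1^j$ enter), whereas you keep all multi-indices and collapse them with the multinomial identity $\sum_{|\alpha|=m}\bz^{\alpha}\overline{\bw^{\alpha}}/\alpha!=\langle\bz,\bw\rangle^{m}/m!$; the two computations are equivalent and yield the same series.
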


\medskip
Let us first compute $S_\lambda(\bz,\bw)$ as follows: Assume that
$\bw={{\bf 1}}=(1,0,\dots, 0)$ is the ``north pole" of the unit sphere
in $\C^n$, then we have
\[
S_\lambda(\bz, {\bf 1})=\sum_{j=0}^\infty {\bz^j{\bf 1}^j\over
\| z_1^j\|_{A_\lambda(\Omega_k)}^2} =\sum_{j=0}^\infty {z_1^j\over
\| z_1^j\|_{A_\lambda(\Omega_k)}^2}
\]
where
\begin{eqnarray*}
\| z_1^j\|_{A_\lambda(\Omega_k)}^2&=&\int_{\C^n} e^{-4\pi
\lambda|\bz|^{2k}}
|z_1|^{2j}dv(\bz)\\
&=&\int_{\partial B_n}|z_1|^{2j} d\sigma(\bz) \int_0^\infty
e^{-4\pi \lambda r^{2k}} r^{2j}r^{2n-1}dr
\end{eqnarray*}
with $r=|\bz|=\left (\sum_{\ell=1}^n|z_\ell|^2\right )^{1\over
2}$. Set $u=4\pi\lambda r^{2k}$. It follows that
\[
{du\over u}=2k{dr\over r}.
\]
Hence we have
\begin{eqnarray*}
\|z_1^j\|_{A_\lambda(\Omega_k)}^2&=&{\pi^n2(j!)\over (j+n-1)!}
\cdot {1\over 2k(4\pi \lambda)^{j+n\over k}}\int_0^\infty
e^{-u}u^{j+n\over k} {du\over u}  \\
&=& {\pi^n 2(j!)\over (j+n-1)!}
\cdot {1\over 2k(4\pi \lambda)^{j+n\over k}} \Gamma\left({j+n\over k}\right).
\end{eqnarray*}
Therefore,
\[
S_\lambda(\bz, {\bf 1})={k\over \pi^n} \sum_{j=0}^\infty {(j+n-1)!\over j!}
{z_i^j\over \Gamma\left({j+n\over k}\right)} (4\pi \lambda)^{j+n\over k}.
\]
Hence
\begin{eqnarray*}
S(\bz,z_{n+1};{\bf 1}, w_{n+1})&=& {k\over \pi^n} \sum_{j=0}^\infty {(j+n-1)!\over j!}
\left (\int_0^\infty e^{-4\pi\lambda A}(4\pi\lambda)^{j+n\over k} d\lambda
\right ){z_1^j\over\Gamma\left({j+n\over k}\right)} \\
&=& {k\over 4\pi^{n+1}}
\sum_{j=0}^\infty {(j+n-1)!\over j!}
{\Gamma\left ({j+n\over k}+1\right)\over \Gamma\left ({j+n\over k}\right)}
A^{-{j+n\over k}-1}z_1^j\\
&=& {1\over 4\pi^{n+1}}
\sum_{j=0}^\infty {(j+n)!\over j!}\left ({z_1\over A^{1\over k}}\right )^j A^{-{n\over k}-1}\\
&=& {n!\over 4\pi^{n+1}}
\sum_{j=0}^\infty {(j+n)!\over j!n!}\left ({z_1\over A^{1\over k}}\right )^j A^{-{n\over k}-1}\\
&=& {n!\over 4\pi^{n+1}} \left (1-{z_1\over A^{1\over k}}\right )^{-n-1}
A^{-{n\over k}-1}.
\end{eqnarray*}
Suppose $\bz=r\cdot \bz^\prime$ with $\|\bz^\prime\|=1$
and $\bz^\prime\in \partial B_n$. Then there exists a unitary transform
$U$ on $\partial B_n$ such that $U(\bz^\prime)={\bf 1}$. Hence,
$\bz^\prime=U^{-1}({\bf 1})$. Therefore,
\begin{eqnarray*}
S(\bz,z_{n+1};\bw, w_{n+1})&=& S(r\cdot \bz^\prime,z_{n+1};\bw, w_{n+1})
= S(r\cdot U^{-1}({\bf 1}),z_{n+1};\bw, w_{n+1})\\
&=& S(r\cdot {\bf 1},z_{n+1};U(\bw), w_{n+1})
=\overline {S(U(\bw), w_{n+1}; r\cdot {\bf 1},z_{n+1})}\\
&=& {n!\over 4\pi^{n+1}}{1\over \left (1-r\cdot \left({\overline {U(\bw)}\over
A^{1\over k}}\right)_1\right)^{n+1}}A^{-{n\over k}-1}\\
&=& {n!\over 4\pi^{n+1}}{1\over \left (1-(r{\bf 1}\cdot
\left({\overline {U(\bw)}\over A^{1\over k}}\right )\right)^{n+1}
A^{{n\over k}+1}}\\
&=& {n!\over 4\pi^{n+1}}{1\over \left (1-(rU^{-1}({\bf 1})\cdot
\left({\bar\bw\over A^{1\over k}}\right)\right)^{n+1}A^{{n\over k}+1}}\\
&=& {n!\over 4\pi^{n+1}}{1\over \left (1-{\bz\cdot \bar \bw \over
A^{1\over k}}\right)^{n+1}A^{{n\over k}+1}}
= {n!\over 4\pi^{n+1}}{A^{{1\over k}-1} \over \left (A^{1\over k}-
\sum_{j=1}^n z_j\bar w_j\right)^{n+1}}.
\end{eqnarray*}
This tells us that
\begin{eqnarray*}
&&S(\bz,z_{n+1};\bw, w_{n+1})\\
&&={n!\over 4\pi^{n+1}}{1 \over \left (A^{1\over k}-
\sum_{j=1}^n z_j\bar w_j\right)^{n+1}A^{k-1\over k} }\\
&&= {n!\over 4\pi^{n+1}} {\left [{i\over 2}(s-t)+{1\over 2}(|\bz|^{2k}+|\bw|^{2k})
+{1\over 2}(\rho+\mu)\right ]^{1-k\over k}\over
\left \{ \left [ {1\over 2}(|\bz|^{2k}+|\bw|^{2k})-{i\over 2}(t-s)+ {1\over 2} (\rho+\mu)\right ]^{1\over k}- \sum_{j=1}^n z_j\bar w_j\right \}^{n+1}}.
\end{eqnarray*}
When both $(\bz,z_{n+1})$ and $(\bw, w_{n+1})$ in $\partial \Omega_k$, {\it $\rho=\mu=0$}, then we have
\[
S(\bz,t;\bw, s)= {n!\over 4\pi^{n+1}} {\left [{i\over 2}(s-t)+{1\over 2}(|\bz|^{2k}+|\bw|^{2k})
\right ]^{1-k\over k}\over
\left \{ \left [ {1\over 2}(|\bz|^{2k}+|\bw|^{2k})-{i\over 2}(t-s)\right ]^{1\over k}- \sum_{j=1}^n z_j\bar w_j\right \}^{n+1}}.
\]
In particular, when $k=1$, the Cauchy--Szeg\"o for the Heisenberg
group $\H_n$ is
\[
S(\bz,t;\bw, s)={n!\over 4\pi^{n+1}} {1\over
\left \{ \left [ {1\over 2}(|\bz|^{2k}+|\bw|^{2k})-{i\over 2}(t-s)\right ]^{1\over k}- \sum_{j=1}^n z_j\bar w_j\right \}^{n+1}}.
\]
We note that $ | [ {1\over 2}(|\bz|^{2k}+|\bw|^{2k})-{i\over 2}(t-s)]^{1\over k}- \sum_{j=1}^n z_j\bar w_j|^{\frac 12}$ can be considered as a generalization of the Kor\'anyi distance to the higher step operators (see Diaz \cite{Di}). 
\smallskip

\begin{remark} 
 
\noindent $(1)$. The domain $\Omega_k$ is equivalent to the ``ellipsoid"
\[
\mathbb{E}_k=\Big\{ (\bw,w_{n+1})\in \C^{n+1}:\, \big
(\sum_{j=1}^n |w_j|^2\big)^k+|w_{n+1}|^2<1\Big\}
\]
via the ``generalized Caley transform":
\[
z_1={w_1\over (1+w_{n+1})^{1\over k}},\dots,
z_n={w_n\over (1+w_{n+1})^{1\over k}},
z_{n+1}={i(1-w_{n+1})\over 1+w_{n+1}}.
\]
Then the Cauchy--Szeg\"o kernel for $\mathbb{E}_k$ is
\[
S(\bz,z_{n+1};\bw, w_{n+1})={n!\over 4\pi^{n+1}}
{1\over \left [ (1-z_{n+1}\bar w_{n+1})^{1\over k}-
\sum_{j=1}^n z_j\bar w_j\right ]^{n+1}(1-z_{n+1}\bar w_{n+1})^{k-1\over k}}.
\]
In particular, when $k=1$, then the Cauchy--Szeg\"o kernel for the
unit ball $\mathbb{B}_{n+1}\subset \C^{n+1}$ is
\[
S(\bz,z_{n+1};\bw, w_{n+1})\,=\, {n!\over 4\pi^{n+1}}\frac{1}{\big(1-
\sum_{j=1}^{n+1} z_j\bar w_j\big)^{n+1}}.
\]

\noindent $(2)$. When $k=1$, then the Cauchy--Szeg\"o projection is closely related to the solvability of the Kohn Laplacian. 
Let us consider the Kohn Laplacian acting on functions, {\it i.e.,} $q=0$. Then we know that 
\[
\square_b\,=\, -\frac{1}{2}\sum_{k=1}^{n}\big(Z_k\bar Z_k+\bar Z_kZ_k\big)-i [Z_k,\bar Z_k]\,=\, -\sum_{k=1}^n Z_k\bar Z_k.
\]
In this case, the operator annihilates the boundary values of holomorphic functions on ${\bf H}_n$. Now we need to deal with the Hans Lewy operator. 
In general, we can't expect this operator is hypoelliptic. 
Moreover, the equation 
\[
\sum_{k=1}^n Z_k\bar Z_k(u)\,=\, f
\]
is generally not even locally solvable. Following a method in Greiner--Kohn--Stein \cite{GrJS} and Greiner--Stein \cite{GrS1}, we know that for any $f\in L^2({\bf H}_n)$ leads to the 
Cauchy--Szeg\"o integral ${\mathcal C}(f)$, defined in $C^\infty_0({\bf H}_n)$ by 
\[
{\mathcal C}(f)(\bz,z_{n+1};\bw, w_{n+1})\,=\, \int_{\partial \widetilde\Omega_n} S(\bz,z_{n+1};\bw, w_{n+1}) f(\bw,w_{n+1})d\sigma,
\]
with 
\[
S(\bz,z_{n+1};\bw, w_{n+1})={2^{n-1} n!\over \pi^{n+1}}\Big\{i(\bar w_{n+1}-z_{n+1}) -2\sum_{k=1}^n z_k\bar w_k \Big\}^{-n-1}.
\]
Here $d\sigma$ is the Lebesgue measure defined on $\partial \widetilde \Omega_n$ which is identified as the Heisenberg group ${\bf H}_n$. The restriction 
of ${\mathcal C}(f)$ to $\partial \widetilde \Omega_n$ is given by 
\begin{equation}
\label{eq:C-S-1}
{\mathcal C}_b(f)\,=\, \lim_{\rho\to 0^+} f\ast S_\rho,
\end{equation} 
where 
\[
S_\rho(\bz,t)={2^{n-1} n!\over \pi^{n+1}}\Big(\rho^2+\sum_{k=1}^n |z_k|^2-it \Big)^{-n-1}.
\]
The convolution in~\eqref{eq:C-S-1} is with respect to the Heisenberg group. Since $f\in L^2$, the limit in~\eqref{eq:C-S-1} exists in $L^2$-norm 
(see Kor\'anyi and V\'agi \cite{KV}). 

Let us consider the following equation: 
\[
\square_b^{(0)}\,=\, \mathcal L_\lambda-i\big(\lambda-n\big) \frac{\partial}{\partial t}.
\]
Thus 
\begin{equation}
\label{eq:C-S-2}
\square_b^{(0)} \big(K_\lambda \big) \,=\, {\mathcal L}_\lambda\big(K_\lambda \big) -i\big(\lambda-n\big) \frac{\partial}{\partial t}\big(K_\lambda \big),
\end{equation}
where 
\[
K_\lambda(\bz, t)\,=\, \frac {2^{2-n}\pi^{n+1}}{\Gamma \big(\frac{n+\lambda}{2}\big) \Gamma \big(\frac{n+\lambda}{2}\big) }
\Big(\sum_{k=1}^n|z_k|^2-it\Big)^{-\frac{n+\lambda}{2}}\Big(\sum_{k=1}^n|z_k|^2+it\Big)^{-\frac{n-\lambda}{2}}.
\]
Formal differentiation of~\eqref{eq:C-S-2} with respect to the variable $\lambda$ yields the following result
\begin{equation*}
\begin{split}
&\square_b^{(0)}\Big[ \frac {2^{n-2}(n-1)!}{\pi^{n+1}}\log\Big( \frac {|\bz|^2-it}{|\bz|^2+it}\Big) \cdot  \Big(\sum_{k=1}^n|z_k|^2-it\Big)^{-n}\Big]\\
=&\,\,\, 
\delta- \frac {2^{n-1}n!} {\pi^{n+1}} \Big(\sum_{k=1}^n|z_k|^2-it\Big)^{-(n+1)} .
\end{split}
\end{equation*}
Denote 
\[
\Psi\,=\, \frac {2^{n-2}(n-1)!}{\pi^{n+1}}\log\big( \frac {|\bz|^2-it}{|\bz|^2+it}\Big) \cdot \Big(\sum_{k=1}^n|z_k|^2-it\Big)^{-n}.
\] 
Then we have the following identity 
\[
\square_b^{(0)}K\,=\, K\square_b^{(0)}\,=\, {\bf I}-\mathcal C_b,
\]
where $K(f)\,=\, f\ast \Psi$ with $f\in C^\infty_0({\bf H}_n)$. 
\end{remark}
\medskip

\section{\bf Sharp Estimates of Cauchy--Szeg\"o kernel} 

\color{black}
Consider the triple $(\partial\Omega_k, d,\mu)$, where $\mu$ is the Lebesgue measure on $\mathbb C\times \mathbb R$.
We use new coordinates $(\bz,t)$
on the boundary $\partial\Omega_k$ to identify it as
$\C\times\R$. Here $\bz=x+iy$ and
$t=\re(z_{2})$.

\color{black}

We now introduce the quasi-distance on $\partial\Omega_k$ as follows: for every $ (\bz, t),(\bw, s) \in \partial \Omega_k$,
\begin{align}\label{dist}
d(\color{black}{ (\bz, t),(\bw, s) }):=h^2(\color{black}{ (\bz, t),(\bw, s) })\rho^{2-2k}(\color{black}{ (\bz, t),(\bw, s) }),
\end{align}
where
$$\rho(\color{black}{ (\bz, t),(\bw, s) ):=  |\bz|+|\bw|+|\sigma|^{1\over2k}\approx  |\bz|+|\bw|+|t-s|^{1\over2k}
},$$
and
$$h(\color{black}{ (\bz, t),(\bw, s) })=|\bz-\bw|^2\rho^{2k-2}(\color{black}{ (\bz, t),(\bw, s) })+|\sigma(\color{black}{ (\bz, t),(\bw, s) })|$$
with
$$ \sigma( (\bz, t),(\bw, s) ) =  t-s+2\im (\bz^k\overline{\bw}^k).$$
Based on Proposition 9.6 in \cite{BGGr3}, we see that this quasi-metric $d$ satisfies the quasi-triangle inequality:
$$ d((\bz, t),(\bu, r)) \lesssim d((\bz, t),(\bw, s))+ d((\bw, s),(\bu, r)).$$

\color{black}
{Recall the functions in \eqref {eq:func-A} and \eqref{P}}
\begin{align*}
&A(\bz,t;\bw, s)= {1\over 2}\big(|\bz|^{2k}+|\bw|^{2k} -i(t-s)\big);\\[6pt]
&\P(\bz,t;\bw, s)={{\bz \overline{\bw}\over A(\bz,t;\bw, s)^{1\over k}}}.\\[6pt]
\end{align*}
By Lemma 9.3 in \cite{BGGr3} and the estimate on Page 242 in \cite{BGGr4}, we have the following properties.
\begin{lemma}\label{lem6.1}
The functions $h, \rho, A$ and $\P$ satisfy
\begin{equation}\begin{split}\label{est 1}
& |\bz^k-\bw^k|^2\lesssim h ((\bz, t),(\bw, s))\lesssim \rho^{2k} ((\bz, t),(\bw, s)) \approx |A(\bz,t;\bw, s)|; \\[6pt]
& | 1-P(\bz,t;\bw, s) |\approx {h ((\bz, t),(\bw, s))\over |A(\bz,t;\bw, s)|},\quad (\bz,t), (\bw, s)\in\partial\Omega_k. 
\end{split}\end{equation}
\end{lemma}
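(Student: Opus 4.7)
The proof proceeds in three stages. Stage 1 establishes the size chain $|\bz^k-\bw^k|^2 \lesssim h \lesssim \rho^{2k} \approx |A|$ by direct manipulation. Stage 2 reduces $|1-\P|\approx h/|A|$ to an algebraic identity combined with the inequality $|\P|\le 1$. Stage 3 handles the remaining obstruction through a case analysis.

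\smallskip

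For the first stage, factor $\bz^k-\bw^k = (\bz-\bw)\sum_{j=0}^{k-1}\bz^{k-1-j}\bw^j$ and bound $|\bz|,|\bw|\le\rho$ to obtain $|\bz^k-\bw^k|\le k\rho^{k-1}|\bz-\bw|$, whence $|\bz^k-\bw^k|^2\lesssim |\bz-\bw|^2\rho^{2k-2}\le h$. The bound $h\lesssim \rho^{2k}$ follows from $|\bz-\bw|\le\rho$ together with $|\sigma|\le\rho^{2k}$ (which is the very definition of $\rho$). Finally, $|A|\approx |\bz|^{2k}+|\bw|^{2k}+|t-s|$ and $\rho^{2k}\approx |\bz|^{2k}+|\bw|^{2k}+|\sigma|$, and these are comparable because $\bigl||\sigma|-|t-s|\bigr|\le 2|\bz|^k|\bw|^k\le |\bz|^{2k}+|\bw|^{2k}$.

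\smallskip

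For the second stage, separate $A$ into real and imaginary parts and use $|\bz^k-\bw^k|^2 = |\bz|^{2k}+|\bw|^{2k}-2\re(\bz^k\overline{\bw}^k)$ together with the definition of $\sigma$ to obtain the key identity
\[
A-(\bz\overline{\bw})^k = \tfrac{1}{2}|\bz^k-\bw^k|^2-\tfrac{i}{2}\sigma.
\]
Dividing by $A$ gives $|1-\P^k|\approx (|\bz^k-\bw^k|^2+|\sigma|)/|A|\lesssim h/|A|$ by Stage 1. The factorization $1-\P^k=(1-\P)(1+\P+\cdots+\P^{k-1})$ together with $|\P|\le 1$ (which forces $|1+\P+\cdots+\P^{k-1}|\le k$) then yields the lower bound $|1-\P|\gtrsim |1-\P^k|/k$.

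\smallskip

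The main obstacle is the reverse upper estimate $|1-\P|\lesssim h/|A|$, since the same factorization cannot be inverted: the factor $1+\P+\cdots+\P^{k-1}$ can vanish when $\P$ approaches a nontrivial $k$-th root of unity on the boundary $|\P|=1$. I will split into two regimes. When $\P$ stays bounded away from $1$, a direct check of the definitions shows $h/|A|$ is bounded below by a $k$-dependent positive constant, while $|1-\P|\le 2$ trivially, so the inequality holds with $k$-dependent constants. When $\P\to 1$---which, by the Remark after \eqref{P}, is the only regime in which $|1-\P|\to 0$, since $\P=1$ forces $(\bz,t)=(\bw,s)$---a Taylor expansion of $\P=\bz\overline{\bw}/A^{1/k}$ around $\bw=\bz$, $t=s$ with $\bw=\bz+\delta$ and $t-s=\tau$ small yields $1-\P\approx -i\sigma/(2k|\bz|^{2k})$ to first order, together with a real correction of size $\sim |\delta|^2/|\bz|^2$ at second order; both are controlled by $h/|A|\approx |\delta|^2/|\bz|^2 + |\sigma|/|\bz|^{2k}$, completing the estimate. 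Alternatively, this local--global analysis is already subsumed by Lemma 9.3 of \cite{BGGr3} and the estimate on page 242 of \cite{BGGr4}, which may simply be invoked.
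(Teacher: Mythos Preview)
The paper does not prove this lemma at all; it simply invokes Lemma~9.3 of \cite{BGGr3} and the estimate on p.~242 of \cite{BGGr4}. Your final sentence (``Alternatively, \ldots\ may simply be invoked'') therefore matches the paper exactly, and everything before it is additional.

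That additional material, however, contains a real gap. You claim Stage~2 delivers the lower bound $|1-\P|\gtrsim h/|A|$, but the chain you actually produce is
\[
|1-\P|\ \ge\ \tfrac{1}{k}\,|1-\P^k|\ \approx\ \frac{|\bz^k-\bw^k|^2+|\sigma|}{|A|}\ \lesssim\ \frac{h}{|A|},
\]
and a lower bound by a quantity that is itself only $\lesssim h/|A|$ proves nothing. The issue is that $|\bz^k-\bw^k|^2+|\sigma|$ is \emph{not} comparable to $h=|\bz-\bw|^2\rho^{2k-2}+|\sigma|$. Take $k\ge 2$, $t=s$, and let $\bz,\bw$ be two distinct $k$-th roots of the same nonzero complex number. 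Then $\bz^k=\bw^k$ and $\sigma=0$, so $|1-\P^k|=0$, while $h=|\bz-\bw|^2\rho^{2k-2}>0$. Your Stage~2 bound is vacuous here, yet the lemma still asserts $|1-\P|\gtrsim h/|A|>0$ (which is true: $\P$ is a nontrivial $k$-th root of unity, hence bounded away from $1$).

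The repair is to fold the lower bound into your Stage~3 dichotomy as well. The equivalence ``$\P\to 1\Leftrightarrow h/|A|\to 0$'' (which you use in one direction) gives both inequalities in the regime where $\P$ is bounded away from $1$, since then both $|1-\P|$ and $h/|A|$ lie between two positive constants. Near $\P=1$ your Taylor sketch needs to be pushed further: you must show that the real part of $1-\P$ is genuinely $\approx |\delta|^2/|\bz|^2$ (not merely $O(|\delta|^2/|\bz|^2)$) and that it cannot cancel against higher-order corrections to the imaginary part, so that $|1-\P|$ truly captures both pieces of $h/|A|\approx |\delta|^2/|\bz|^2+|\sigma|/|\bz|^{2k}$. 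That two-sided local estimate is exactly what is supplied by the cited results in \cite{BGGr3,BGGr4}.
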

Based on these notation, we see that the Cauchy-Szeg\"o  kernel  $S(\bz,t;\bw, s)$ on $\partial \Omega_k$ can be expressed as 
\begin{align}
S(\bz,t;\bw, s) = {1\over 4\pi^2}A^{-{k+1\over k}}(\bz,t;\bw, s) \big(1-\P(\bz,t;\bw, s)\big)^{-2}.
\end{align}
Moreover, based on \eqref{est 1}, we see that
\begin{align}\label{est33}
d((\bz,t),(\bw, s)) \lesssim \rho^{2k+2} ((\bz, t),(\bw, s)).
\end{align}


\color{black}

\begin{theorem}\label{thm 7.1}
For both $(\bz,t)$ and $(\bw, s)$ in $\partial \Omega_k$ with $(\bz,t)\not=(\bw, s)$,  the Cauchy--Szeg\"o projection associated with the kernel  $S(\bz,t;\bw, s)$ is a Calder\'on-Zygmund operator on $(\partial\Omega_k, d,\mu)$.
\end{theorem}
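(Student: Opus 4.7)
The plan is to verify the standard three defining conditions of a Calder\'on--Zygmund operator on the space of homogeneous type $(\partial\Omega_k,d,\mu)$: $L^2$-boundedness, the kernel size estimate, and a H\"older-type regularity condition in each variable. The $L^2$ boundedness is automatic because $\mathbf S:L^2(\partial\Omega_k)\to H^2(\Omega_k)$ is an orthogonal projection by definition. The quasi-triangle inequality is already supplied by Proposition~9.6 in \cite{BGGr3}, and doubling will follow from the anisotropic dilations $(\bz,t)\mapsto(\delta\bz,\delta^{2k}t)$, under which both $d$ and $\mu$ scale by $\delta^{2k+2}$, giving $\mu(B(x,r))\approx r$ with constants that remain uniform once base points near the high-step locus $\bz=0$ are handled by a short separate comparison.

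For the size estimate, the closed form $S=(4\pi^2)^{-1}A^{-(k+1)/k}(1-\P)^{-2}$ combined with Lemma~\ref{lem6.1} (using $|1-\P|^{-2}\approx|A|^2/h^2$ and $|A|\approx\rho^{2k}$) gives immediately
\begin{equation*}
|S((\bz,t),(\bw,s))|\lesssim \frac{|A|^{(k-1)/k}}{h^{2}}\approx\frac{\rho^{2k-2}}{h^{2}}=\frac{1}{d((\bz,t),(\bw,s))}\approx\frac{1}{\mu\bigl(B((\bz,t),d)\bigr)},
\end{equation*}
which is the required CZ size bound.

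The main work then goes into the H\"older-type regularity $|S(x,y)-S(x',y)|\lesssim (d(x,x')/d(x,y))^{\eta}\cdot d(x,y)^{-1}$ (and its analogue in the second variable) for some $\eta\in(0,1]$ whenever $d(x,x')\ll d(x,y)$. My plan is to differentiate $S$ via the chain rule in $A$ and $\P$, bound the resulting expressions using Lemma~\ref{lem6.1} to re-express $\partial A$, $\partial\P$, and powers of $(1-\P)^{-1}$ in terms of the geometric quantities $h$, $\rho$, and $|A|$, and then integrate along a short path from $x$ to $x'$ that is piecewise horizontal (in $\bz$) and vertical (in $t$), with the vertical leg adapted to the increment $\sigma(x,x')$ appearing in the definition of $h$. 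The symmetric estimate in the second variable follows by an analogous computation using the conjugate form of $\P$.

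The principal obstacle is the bookkeeping of the singularity at $\{\P=1\}$: each differentiation of $(1-\P)^{-1}$ produces a factor of order $|A|/h$, and one must show uniformly across all configurations---especially as $\bz\to 0$, where the step of $\partial\Omega_k$ jumps from $2$ to $2k$ and the scaling of $\rho$ changes accordingly---that this factor is absorbed by the appropriate component of $d(x,x')$ through the relation $d=h^{2}\rho^{2-2k}$ and the bound $d\lesssim\rho^{2k+2}$ recorded in \eqref{est33}. Once this regularity is secured, the three ingredients combine via the Coifman--Weiss framework for Calder\'on--Zygmund operators on spaces of homogeneous type to conclude Theorem~\ref{thm 7.1}.
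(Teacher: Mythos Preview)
Your outline matches the paper's architecture: $L^2$-boundedness is free, the size estimate follows exactly as you write from Lemma~\ref{lem6.1}, and the substance lies in the H\"older regularity. However, the phrase ``piecewise horizontal (in $\bz$) and vertical (in $t$)'' hides the one genuinely delicate step, and taken literally it fails. The untwisted derivative $\partial A/\partial\bw=\tfrac{k}{2}\bw^{k-1}\overline{\bw}^{\,k}$ is only $O(\rho^{2k-1})$; feeding this into $\partial S/\partial\bw$, which carries an extra $(1-\P)^{-1}\approx |A|/h$, gives $|\partial_\bw S|\lesssim d^{-1}h^{-1}\rho^{2k-1}$, and after multiplying by $|\bw_1-\bw_0|$ an uncontrolled factor $\rho^{k}h^{-1/2}$ survives---unbounded exactly in the regime $h\ll\rho^{2k}$. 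The paper avoids this by the twisted change of variables $\bw'=\bw$, $s'=s-2\,\im(\bz^{k}\overline{\bw}^{\,k})$ and a \emph{straight segment in $(\bw',s')$}: the twisted horizontal derivative produces the difference
\[
\frac{\partial A}{\partial\bw}+ik\bw^{k-1}\overline{\bz}^{\,k}\frac{\partial A}{\partial s}
=k\bw^{k-1}\bigl(\overline{\bw}^{\,k}-\overline{\bz}^{\,k}\bigr)=O\bigl(\rho^{k-1}h^{1/2}\bigr),
\]
which is precisely the gain needed. Your clause ``vertical leg adapted to $\sigma(x,x')$'' gestures in the right direction, but the cancellation has to occur in the \emph{horizontal} leg; moving $\bw$ with $s$ frozen does not work.

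Two further ingredients you will need and should not treat as automatic: (i) uniform comparability of $\rho$ and $h$ along the interpolated path, together with the bound $|s'_1-s'_0|\le h_{10}+h_0^{1/2}h_{10}^{1/2}$, all imported from \cite[(9.34)--(9.37)]{BGGr3}; and (ii) a closing dichotomy converting the intermediate bound $d_0^{-1}(\rho_0/\rho_{10})^{(k-1)/2}(d_{10}/d_0)^{1/4}$ into the genuine CZ regularity $d_0^{-1}(d_{10}/d_0)^{1/(2k+2)}$. The paper splits into $d_0\approx\rho_0^{2k+2}$ versus $d_0\ll\rho_0^{2k+2}$; in the latter case one checks $\rho_0\approx|\bw_0|\le\rho_{10}$ so the $\rho$-ratio is harmless. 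This last step is exactly where the high-step locus you flagged enters the argument.
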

\begin{proof}

We first consider the size estimate of $S(\bz,t;\bw, s)$. Note that for $(\bz,t)\not=(\bw, s)$, by Lemma \ref{lem6.1}, we have
\allowdisplaybreaks{
\begin{align}\label{size}
|S(\bz,t;\bw, s)|&={1\over 4\pi^2} \left| A^{-{k+1\over k}}(\bz,t;\bw, s) \big(1-\P(\bz,t;\bw, s)\big)^{-2}\right|\nonumber\\[6pt]
&\approx | A(\bz,t;\bw, s)|^{-{k+1\over k}}  | A(\bz,t;\bw, s)|^{2} h^{-2} ((\bz, t),(\bw, s))\nonumber\\[6pt]
&= | A(\bz,t;\bw, s)|^{-{1\over k}+1}   h^{-2} ((\bz, t),(\bw, s))\\[6pt]
&\approx \rho^{-(2-2k)} ((\bz, t),(\bw, s))  h^{-2} ((\bz, t),(\bw, s))\nonumber\\[6pt]
&={1\over d((\bz,t),(\bw, s))}. \nonumber
\end{align}
}

Next, we verify the regularity conditions. Consider the difference
$$ |S(\bz,t;\bw_1, s_1)-S(\bz,t;\bw_0, s_0)|, $$
where 
\color{black}
\begin{align}\label{d10}
 d((\bw_1, s_1),  (\bw_0, s_0) )\leq \color{black}{c} d((\bz,t),(\bw_0, s_0)),
 \end{align}
for some small $c$.
 \color{black}
 
To continue, we consider the following substitution: fix $(\bz,t)$, 
\color{black} 
by the change of variables
\color{black}
\begin{align}\label{zw}
    \left\{
                \begin{array}{ll}
                  \bw'=\bw,\\[6pt]
                  s'=s-2\im (\bz^k \overline{\bw}^k),
                \end{array}
              \right.
  \end{align}
\color{black}
we have  
$$S(\bz,t;\bw, s)=S(\bz,t;\bw', s'+2\im(z^k\overline{\bw'}^k))=:\widetilde{S}(\bz,t;\bw', s'),$$
and thus,
\smallskip  

\begin{align}\label{partial}
    \left\{
                \begin{array}{ll}
                 {\displaystyle\partial\widetilde{S}\over\displaystyle \partial\bw'} ={\displaystyle\partial S\over\displaystyle \partial\bw} + i k\bw^{k-1} \overline{\bz}^k {\displaystyle\partial S\over\displaystyle \partial s},\\[11pt]
                   {\displaystyle\partial \widetilde{S}\over\displaystyle \partial s'}= {\displaystyle\partial S\over\displaystyle \partial s}.\\
                \end{array}
              \right.
  \end{align}

 \color{black}
 

 \color{black}

Let 
\begin{align*}
s'_\alpha&=s'(\bw_\alpha, s_\alpha), &\alpha=0,1,\\
(\bw'_\nu, s'_\nu)&=(1-\nu)(\bw_0,s'_0)+\nu (\bw_1,s'_1), &0\leq\nu\leq 1.
\end{align*}
Let $(\bw_\nu, s_\nu)$ denote the point whose $(\bw',s')$ coordinates are $(\bw'_\nu, s'_\nu)$. Therefore,
\begin{align*}
\bw_\nu&=(1-\nu)\bw_0+\nu \bw_1,\\
s_\nu&=s'_\nu+2\im(\bz^k\overline{\bw}^k_{\nu})\\
&=(1-\nu)\big(s_0-2\im(\bz^k\overline{\bw}^k_{0})\big)+\nu\big(s_1-2\im(\bz^k\overline{\bw}^k_{1})\big)\\
&\quad+2\im\left[ \bz^k\left( (1-\nu)\overline{\bw}_0+\nu\overline{\bw}_k\right)^k\right].
\end{align*}


By \cite[(9.34), (9.35), (9.37)]{BGGr3}, we have
\begin{equation}\begin{split}\label{est11}
\rho((\bz,t), (\bw_\nu,s_\nu))&\approx\rho((\bz,t), (\bw_0,s_0)),\\
\quad h((\bz,t), (\bw_\nu,s_\nu))&\approx h((\bz,t), (\bw_0,s_0)),\quad 0\leq\nu\leq 1,
\end{split}\end{equation}
and
\begin{align}\label{est22}
 &|\bw_1-\bw_0|\lesssim d^{1\over 4}((\bw_1,s_1), (\bw_0,s_0))\rho^{\frac{1-k}{2}} ((\bw_1,s_1), (\bw_0,s_0)),\nonumber\\
&|s'_1-s'_0|\leq h((\bw_1,s_1), (\bw_0,s_0))\\
&\qquad\qquad+h^{1\over 2} ((\bz,t), (\bw_0,s_0)) h^{1\over 2}((\bw_1,s_1), (\bw_0,s_0)),\nonumber\\ 
&h((\bw_1,s_1), (\bw_0,s_0))\lesssim h ((\bz,t), (\bw_0,s_0)).\nonumber
\end{align}

\color{black}
Since
\begin{align}
    \left\{
                \begin{array}{ll}
                {\displaystyle\partial A\over\displaystyle \partial\bw} + i k\bw^{k-1} \overline{\bz}^k {\displaystyle\partial A \over\displaystyle \partial s} 
=k{\bw}^{k-1}\left(\overline{\bw}^k-\overline{\bz}^k\right)= \mathcal O(\rho^{k-1} h^{1\over2}),\\[12pt]
                   {\displaystyle\partial \P\over\displaystyle \partial\bw} + i k\bw^{k-1} \overline{\bz}^k {\displaystyle\partial \P\over\displaystyle \partial s}
= - {\displaystyle 1\over\displaystyle k}
                   {\displaystyle \P\over\displaystyle A}\left({\displaystyle\partial A\over\displaystyle \partial\bw} + i k\bw^{k-1} \overline{\bz}^k {\displaystyle\partial A\over\displaystyle \partial s}\right)  ,
                \end{array}
              \right.
  \end{align}
 for $0\leq\nu\leq1$, we have
\begin{align*}
     {\displaystyle\partial \widetilde{S}\over\displaystyle \partial\bw'} \bigg|_{(\bz,t;\bw'_\nu, s'_\nu)}
     &=\left({\displaystyle\partial S\over\displaystyle \partial\bw} + i k\bw^{k-1} \overline{\bz}^k {\displaystyle\partial S\over\displaystyle \partial s}\right) \bigg|_{(\bz,t;\bw_\nu, s_\nu)}\\
     &={1\over 4\pi^2}\bigg\{-{k+1\over k} A^{ -{k+1\over k} -1} (1-\P)^{-2}\left({\displaystyle\partial  A\over\displaystyle \partial\bw} + i k\bw^{k-1} \overline{\bz}^k {\displaystyle\partial  A\over\displaystyle \partial s}\right) \bigg|_{(\bz,t;\bw_\nu, s_\nu)}\\
     &\quad
     + 2 A^{-{k+1\over k} } (1-\P)^{-3} \left({\displaystyle\partial  \P\over\displaystyle \partial\bw} + i k\bw^{k-1} \overline{\bz}^k {\displaystyle\partial  \P\over\displaystyle \partial s}\right)\bigg|_{(\bz,t;\bw_\nu, s_\nu)}\bigg\}\\
     &= {1\over 4\pi^2} A^{-{k+1\over k} } (1-\P)^{-2}\bigg[ C_1 {1\over A} 
     +C_2 {\P\over A(1-\P)} \bigg] \left({\displaystyle\partial  A\over\displaystyle \partial\bw} + i k\bw^{k-1} \overline{\bz}^k {\displaystyle\partial  A\over\displaystyle \partial s}\right)\bigg|_{(\bz,t;\bw_\nu, s_\nu)} \\
     &=S\bigg[ C_1 {1\over A} 
     +C_2 {\P\over A(1-\P)} \bigg]\bigg|_{(\bz,t;\bw_\nu, s_\nu)} k{\bw_\nu}^{k-1}\left(\overline{\bw_\nu}^k-\overline{\bz}^k\right).
\end{align*}

\medskip  
By using \eqref{zw}, \eqref{size}, \eqref {est 1} and \eqref{est11}, we obtain that 
\allowdisplaybreaks{
\begin{align*}
     &\bigg|{\displaystyle\partial \widetilde{S}\over\displaystyle \partial\bw'} \bigg|_{(\bz,t;\bw'_\nu, s'_\nu)} (\bw'_1-\bw'_0)\bigg|\\
     &=\bigg|\left({\displaystyle\partial S\over\displaystyle \partial\bw} + i k\bw^{k-1} \overline{\bz}^k {\displaystyle\partial S \over\displaystyle \partial s}\right) \bigg|_{\color{black}{(\bz,t;\bw_\nu, s_\nu)} }(\bw'_1-\bw'_0)\bigg|\\
      &=\bigg|\left({\displaystyle\partial S\over\displaystyle \partial\bw} + i k\bw^{k-1} \overline{\bz}^k {\displaystyle\partial S \over\displaystyle \partial s}\right) \bigg|_{\color{black}{(\bz,t;\bw_\nu, s_\nu)} }(\bw_1-\bw_0)\bigg|\\
     &\lesssim { 1\over d((\bz,t),(\bw_0, s_0)) } \, \bigg( {1\over |A|} + {|\P|\over h}  \bigg)\bigg|_{(\bz,t;\bw'_\nu, s'_\nu)}
      \left|k{\bw_\nu}^{k-1}\left(\overline{\bw_\nu}^k-\overline{\bz}^k\right)\right|
      {d^{1\over4}( (\bw_1,s_1),(\bw_0,s_0) ) \over  \rho^{k-1\over2}( (\bw_1,s_1),(\bw_0,s_0) )}\\[7pt]
      &\lesssim { 1\over d((\bz,t),(\bw_0, s_0)) } \, \bigg( {1\over |A|} + {1\over h}  \bigg)\bigg|_{(\bz,t;\bw'_\nu, s'_\nu)}
      \left|k{\bw_\nu}^{k-1}\left(\overline{\bw_\nu}^k-\overline{\bz}^k\right)\right|
      {d^{1\over4}( (\bw_1,s_1),(\bw_0,s_0) ) \over  \rho^{k-1\over2}( (\bw_1,s_1),(\bw_0,s_0) )}\\[7pt]
  &\lesssim { 1\over d((\bz,t),(\bw_0, s_0)) }\, {1\over  h ((\bz,t),(\bw_0, s_0))} \left|k{\bw_\nu}^{k-1}\left(\overline{\bw_\nu}^k-\overline{\bz}^k\right)\right|      {d^{1\over4}( (\bw_1,s_1),(\bw_0,s_0) ) \over  \rho^{k-1\over2}( (\bw_1,s_1),(\bw_0,s_0) )}\\[7pt]    
      &\lesssim { 1\over d((\bz,t),(\bw_0, s_0)) } \cdot   { \rho^{k-1}((\bz,t),(\bw_0, s_0))h^{1\over2}((\bz,t),(\bw_0, s_0))\over h((\bz,t),(\bw_0, s_0))} \cdot
      {d^{1\over4}( (\bw_1,s_1),(\bw_0,s_0) ) \over  \rho^{k-1\over2}( (\bw_1,s_1),(\bw_0,s_0) )}  \\[7pt]
      &= { 1\over d((\bz,t),(\bw_0, s_0)) }  \cdot  { \rho^{k-1\over2}((\bz,t),(\bw_0, s_0))\over d^{1\over4}((\bz,t),(\bw_0, s_0))}  \cdot
      {d^{1\over4}( (\bw_1,s_1),(\bw_0,s_0) ) \over  \rho^{k-1\over2}( (\bw_1,s_1),(\bw_0,s_0) )}  \\[7pt]
      &= { 1\over d((\bz,t),(\bw_0, s_0)) }  \bigg(  { \rho((\bz,t),(\bw_0, s_0))\over  \rho( (\bw_1,s_1),(\bw_0,s_0) ) }  \bigg)^{k-1\over2}
      \bigg({d( (\bw_1,s_1),(\bw_0,s_0) ) \over d((\bz,t),(\bw_0, s_0)) } \bigg)^{1\over4}. \\[7pt]
\end{align*}
}

Moreover, we have

\begin{align*}
     {\displaystyle\partial \widetilde{S}\over\displaystyle \partial\overline{\bw'}} \bigg|_{(\bz,t;\bw'_\nu, s'_\nu)}
     &=\left({\displaystyle\partial S\over\displaystyle \partial\overline{\bw}} - i k\overline{\bw}^{k-1} {\bz}^k {\displaystyle\partial S\over\displaystyle \partial s}\right) \bigg|_{(\bz,t;\bw_\nu, s_\nu)}\\
     &= {1\over 4\pi^2}\bigg\{-{k+1\over k} A^{ -{k+1\over k} -1} (1-\P)^{-2}\left({\displaystyle\partial  A\over\displaystyle \partial\overline{\bw}}  - i k\overline{\bw}^{k-1} {\bz}^k {\displaystyle\partial  A\over\displaystyle \partial s}\right) \bigg|_{(\bz,t;\bw_\nu, s_\nu)}\\
     &\quad
     + 2 A^{-{k+1\over k} } (1-\P)^{-3} \left({\displaystyle\partial  \P\over\displaystyle \partial\overline{\bw}}  - i k\overline{\bw}^{k-1} {\bz}^k {\displaystyle\partial  \P\over\displaystyle \partial s}\right)\bigg|_{(\bz,t;\bw_\nu, s_\nu)}\bigg\}\\
     &=S\bigg\{{ C_1\over A} \left({\displaystyle\partial  A\over\displaystyle \partial\overline{\bw}}  - i k\overline{\bw}^{k-1} {\bz}^k {\displaystyle\partial  A\over\displaystyle \partial s}\right)  \\
     &\quad
    + {C_2\over 1-\P}\left({\displaystyle\partial  \P\over\displaystyle \partial\overline{\bw}}  - i k\overline{\bw}^{k-1} {\bz}^k {\displaystyle\partial  \P\over\displaystyle \partial s}\right)\bigg\} \bigg|_{(\bz,t;\bw_\nu, s_\nu)}.
\end{align*}

Hence,
\begin{align*}
     &\bigg|{\displaystyle\partial \widetilde{S}\over\displaystyle \partial\overline{\bw'}}\bigg|_{\color{black}{(\bz,t;\bw'_\nu, s'_\nu)}}( \overline{\bw'_1}-\overline{\bw'_0} )\bigg|\\
     &=\bigg|\left({\displaystyle\partial S\over\displaystyle \partial\overline{\bw}} - i k\overline{\bw}^{k-1} {\bz}^k {\displaystyle\partial S\over\displaystyle \partial s}\right) \bigg|_{(\bz,t;\bw_\nu, s_\nu)}(\overline{\bw_1}-\overline{\bw_0})\bigg|\\
     &={ |S|}\bigg|{ C_1\over  A} \left({\displaystyle\partial  A\over\displaystyle \partial\overline{\bw}}  - i k\overline{\bw}^{k-1} {\bz}^k {\displaystyle\partial  A\over\displaystyle \partial s}\right)  
    + {C_2\over 1-\P}\left({\displaystyle\partial  \P\over\displaystyle \partial\overline{\bw}}  - i k\overline{\bw}^{k-1} {\bz}^k {\displaystyle\partial  \P\over\displaystyle \partial s}\right)\bigg| \bigg|_{(\bz,t;\bw_\nu, s_\nu)} 
    \left| \overline{\bw_1}-\overline{\bw_0} \right|\\
     &\lesssim \bigg\{{1\over d}\bigg|{ 1\over  A} \left({\displaystyle\partial  A\over\displaystyle \partial\overline{\bw}}  - i k\overline{\bw}^{k-1} {\bz}^k {\displaystyle\partial  A\over\displaystyle \partial s}\right)  
    \bigg|\bigg\}\bigg|_{(\bz,t;\bw_\nu, s_\nu)}\left| \overline{\bw_1}-\overline{\bw_0} \right|\\
    &\quad+ \bigg\{{1\over d}\bigg| {1\over 1-\P}\left({\displaystyle\partial  \P\over\displaystyle \partial\overline{\bw}}  - i k\overline{\bw}^{k-1} {\bz}^k {\displaystyle\partial  \P\over\displaystyle \partial s}\right)\bigg| \bigg\}\bigg|_{(\bz,t;\bw_\nu, s_\nu)}\left| \overline{\bw_1}-\overline{\bw_0} \right| \\
     &=: I_1+I_2.\\
\end{align*}

%
For $I_1$, by \eqref{est22}, \eqref{est11}, \eqref{est 1}, \eqref{est33}, we obtain that 
\begin{align*}
 I_1&\lesssim{1\over d((\bz,t),(\bw_0, s_0)) } \bigg|{k\over A}  \overline{\bw_{\nu}}^{k-1} (\bz^k+\bw_{\nu}^k)\bigg|\ | \overline{\bw_1}-\overline{\bw_0} |  \\[7pt]
&\lesssim {1\over d((\bz,t),(\bw_0, s_0)) } {1\over |A|}  |\bw_{\nu}^{k-1}|  (|\bz|^k+|\bw_{\nu}|^k)\ {d^{1\over4}( (\bw_1,s_1),(\bw_0,s_0) ) \over \rho^{k-1\over2}( (\bw_1,s_1),(\bw_0,s_0) )} \\[7pt]
&\lesssim {1\over d((\bz,t),(\bw_0, s_0)) }\cdot { \rho^{2k-1}((\bz,t),(\bw_0, s_0)) \over \rho^{2k} ((\bz,t),(\bw_0, s_0)) } \cdot {d^{1\over4}( (\bw_1,s_1),(\bw_0,s_0) ) \over \rho^{k-1\over2}( (\bw_1,s_1),(\bw_0,s_0) )} \\[7pt]
&\lesssim {1\over d((\bz,t),(\bw_0, s_0)) } \cdot { \rho^{k-1}((\bz,t),(\bw_0, s_0)) \over h^{1\over 2} ((\bz,t),(\bw_0, s_0)) } \cdot {d^{1\over4}( (\bw_1,s_1),(\bw_0,s_0) ) \over \rho^{k-1\over2}( (\bw_1,s_1),(\bw_0,s_0) )} \\[7pt]
&\approx {1\over d((\bz,t),(\bw_0, s_0)) } \bigg({\rho((\bz,t),(\bw_0, s_0))\over \rho( (\bw_1,s_1),(\bw_0,s_0) )  }\bigg)^{k-1\over2} \, \bigg({ d( (\bw_1,s_1),(\bw_0,s_0) )\over d((\bz,t),(\bw_0, s_0)) }\bigg)^{1\over4}.\\[7pt]
 \end{align*}

Also, note that 
\begin{align*}
&\left({\displaystyle\partial  \P\over\displaystyle \partial\overline{\bw}}  - i k\overline{\bw}^{k-1} {\bz}^k {\displaystyle\partial  \P\over\displaystyle \partial s}\right)\Bigg|_{ (\bz,t;\bw_{\nu},s_{\nu}) } \\
&= { \bz\over A^{1\over k} } - \bigg( { \bz \cdot \overline{\bw} \over k A^{{1\over k}+1} } \bigg)
\big(  k  \overline{\bw_{\nu}} ^{k-1}\bw_{\nu}^k - ik   \overline{\bw_{\nu}}^{k-1} \bz_{\nu}^k\cdot i \big)\\[7pt]
&=  { \bz\over A^{{1\over k}+1} } \Big\{ |\bz|^{2k} + |\bw_{\nu}|^{2k} - i (t-s_{\nu}) - |\bw_{\nu}|^{2k} - \overline{ \bw_{\nu}}^k  \bz^k \Big\}\\[7pt]
&=  { \bz\over A^{{1\over k}+1} } \Big\{ \bz^{k} ( \overline{\bz}^k - \overline{\bw_{\nu}}^k) - i (t-s_{\nu})  \Big\}\\[7pt]
&=  { \bz\over A^{{1\over k}+1} } \left\{ \re\Big(\bz^{k} ( \overline{\bz}^k - \overline{\bw_{\nu}}^k )\Big) 
- i (t-s_{\nu} +2\im ~(\bz^{k}\overline{\bw_{\nu}}^k) ) -i\im\Big(\bz^{k} ( \overline{\bz}^k - \overline{\bw_{\nu}}^k )\Big)    \right\}.\\[7pt]
\end{align*}

By \color{black}{\eqref{est 1} and \eqref{est11}}, we have
\color{black}
\begin{align*}
 |\bz^{k} ( \overline{\bz}^k - \overline{\bw_{\nu}}^k)| &= 
|\bz^{k}| | \overline{\bz}^k - \overline{\bw_{\nu}}^k | \\
&\lesssim |\bz^{k}| h^{1\over2}((\bz, t),(\bw_0, s_0) )\\
&\lesssim \rho^k((\bz, t),(\bw_0, s_0) ) h^{1\over2}((\bz, t),(\bw_0, s_0) )
\end{align*}
and that 
$$ |(t-s_{\nu} +2\im \bz^{k}\overline{\bw_{\nu}}^k ) |=\sigma ((\bz, t),(\bw_{\nu}, s_{\nu}) )\lesssim h((\bz, t),(\bw_0, s_0) ).  $$
\color{black}
Therefore, we can obtain that 
\begin{align*}
 I_2
  &={1\over d\color{black}{ ((\bz,t), (\bw_{\nu},s_{\nu})) }}\bigg|
     {1\over (1-\P)A} {\displaystyle\bz\over\displaystyle A^{1\over k}} \\[5pt]
     &\quad\times \left\{\re\Big(\bz^{k} ( \overline{\bz}^k - \overline{\bw_{\nu}}^k )\Big) 
- i (t-s_{\nu} +2\im \bz^{k}\overline{\bw_{\nu}}^k ) -i\im\Big(\bz^{k} ( \overline{\bz}^k - \overline{\bw_{\nu}}^k )\Big)    \right\}\bigg| | \overline{\bw_1}-\overline{\bw_0} |  \\[7pt]
&\lesssim {1\over d((\bz,t),(\bw_0, s_0)) } \cdot{ d^{1\over4}( (\bw_1,s_1),(\bw_0,s_0) )\over \rho^{k-1\over2}( (\bw_1,s_1),(\bw_0,s_0) ) }\cdot{\rho((\bz,t),(\bw_0, s_0))\over h((\bz,t),(\bw_0, s_0)) \rho^2((\bz,t),(\bw_0, s_0))} \\
& \quad\times
\left( h^{1\over2}((\bz,t),(\bw_0, s_0))\rho^k((\bz,t),(\bw_0, s_0))+h((\bz,t),(\bw_0, s_0)) \right) \\[7pt]
&\lesssim {1\over d((\bz,t),(\bw_0, s_0)) } \cdot
{ d^{1\over4}( (\bw_1,s_1),(\bw_0,s_0) )\over \rho^{k-1\over2} ( (\bw_1,s_1),(\bw_0,s_0) )}
\cdot
{\rho((\bz,t),(\bw_0, s_0))\over h((\bz,t),(\bw_0, s_0)) \rho^2((\bz,t),(\bw_0, s_0))} \\
& \quad\times h^{1\over2}\left((\bz,t),(\bw_0, s_0)\right)\rho^k((\bz,t),(\bw_0, s_0))\\[7pt]
&= {1\over d((\bz,t),(\bw_0, s_0)) } \cdot{ d^{1\over4}( (\bw_1,s_1),(\bw_0,s_0) )\over \rho^{k-1\over2} ( (\bw_1,s_1),(\bw_0,s_0) )}\cdot {\rho^{k-1}((\bz,t),(\bw_0, s_0))\over h^{1\over2} ((\bz,t),(\bw_0, s_0))} \\[7pt]
&\approx {1\over d((\bz,t),(\bw_0, s_0)) } \cdot \bigg({\rho((\bz,t),(\bw_0, s_0))\over \rho( (\bw_1,s_1),(\bw_0,s_0) )  }\bigg)^{k-1\over2} \, \bigg({ d( (\bw_1,s_1),(\bw_0,s_0) )\over d((\bz,t),(\bw_0, s_0)) }\bigg)^{1\over4}.\\[7pt]
 \end{align*}

\color{black}

Moreover, note that
\begin{align*}
 {\displaystyle\partial \widetilde{S}\over\displaystyle\partial s'}\Bigg|_{(\bz,t;\bw'_{\nu},s'_{\nu})}
&= {\displaystyle\partial {S}\over\displaystyle\partial s}\Bigg|_{(\bz,t;\bw_{\nu},s_{\nu})}\\
&=\left\{-{k+1\over k} A^{-{k+1\over k}-1}  {\displaystyle\partial A\over\displaystyle\partial s} (1-\P)^{-2}
+ 2A^{-{k+1\over k}} (1-\P)^{-3}  {\displaystyle\partial \P\over\displaystyle\partial s}\right\}\Bigg|_{(\bz,t;\bw_{\nu},s_{\nu})}\\
&=- {k+1\over k} {i\over A^{{1\over k}+2} (1-\P)^2} - { 2\over k }\ i\ { \bz \overline{\bw_{\nu}}\over A^{ {2\over k}+2 } (1-\P)^3}.
\\[7pt]
 \end{align*}
 \color{black}
As a consequence, by \eqref{est11},  \eqref{est22}, \eqref{dist}, we further obtain that 
\begin{align*}
&\bigg| {\displaystyle\partial \widetilde{S}\over\displaystyle\partial s'} \bigg|_{(\bz,t;\bw'_{\nu},s'_{\nu})}(s'_1-s'_0)\bigg|\\
&={1\over 4\pi^2} \bigg| - {k+1\over k} {i\over A^{{1\over k}+2} (1-\P)^2} - { 2\over k }\ i\ { \bz \overline{\bw_{\nu}}\over A^{ {2\over k}+2 } (1-\P)^3} \bigg|\ |s'_1-s'_0|
\\[7pt]
&= {1\over 4\pi^2A^{{1\over k}+1} (1-\P)^2}   \bigg|   { k+1\over k }  { 1\over A} + { 2\over k  } { \bz \overline{\bw_{\nu}}\over A^{ {1\over k}+1 } (1-\P)}  \bigg|\ |s'_1-s'_0|
\\[7pt]
&\lesssim {1\over d((\bz,t),(\bw_0, s_0))} \bigg(  {1\over |A|} +  { |\bz \overline{\bw_{\nu}}|\over |A|^{ {1\over k} } h((\bz,t),(\bw_0, s_0))}
\bigg)\ |s'_1-s'_0|
\\[7pt]
&\lesssim { 1\over d((\bz,t),(\bw_0, s_0)) } \bigg(  {1\over h((\bz,t),(\bw_0, s_0))} +  {  \rho^2((\bz,t),(\bw_0, s_0))\over \rho^2((\bz,t),(\bw_0, s_0)) h((\bz,t),(\bw_0, s_0))}
\bigg)\ |s'_1-s'_0|
\\[7pt]
&\lesssim { 1\over d((\bz,t),(\bw_0, s_0)) } \ {1\over h((\bz,t),(\bw_0, s_0))} \ |s'_1-s'_0|
\\[7pt]
&\lesssim { 1\over d((\bz,t),(\bw_0, s_0)) } \ {1\over h ((\bz,t),(\bw_0, s_0))} \\
&\quad\times( h ((\bw_1, s_1),  (\bw_0, s_0) ) + h^{1\over2} ((\bz,t),(\bw_0, s_0)) h^{1\over2} ((\bw_1, s_1),  (\bw_0, s_0) )  )
\\[7pt]
&\lesssim { 1\over d((\bz,t),(\bw_0, s_0)) } \ \bigg({h ((\bw_1, s_1),  (\bw_0, s_0) )\over h ((\bz,t),(\bw_0, s_0))} \bigg) ^{1\over2}  
\\[7pt]
&\lesssim { 1\over d((\bz,t),(\bw_0, s_0)) } \ \bigg({d( (\bw_1,s_1),(\bw_0,s_0) )\over d((\bz,t),(\bw_0, s_0))} \bigg) ^{1\over4}   \ \bigg({\rho ((\bz,t),(\bw_0, s_0))\over \rho ((\bw_1, s_1),  (\bw_0, s_0) )} \bigg) ^{k-1\over2} .
\\[7pt]
 \end{align*}

\color{black}
To sum up, we have
\begin{align*}
&\bigg|\Big \langle \nabla_{ \bw', \overline{\bw'},s'} \widetilde{S}(\bz,t;\bw'_{\nu},s'_{\nu}), \ (\bw'_1-\bw'_0, \overline{\bw'_1}- \overline{\bw'_0}, s'_1-s'_0 ) \Big \rangle \bigg|\\
&\lesssim { 1\over d((\bz,t),(\bw_0, s_0)) }   \ \bigg({\rho ((\bz,t),(\bw_0, s_0))\over \rho ((\bw_1, s_1),  (\bw_0, s_0) )} \bigg) ^{k-1\over2} \ \bigg({d( (\bw_1,s_1),(\bw_0,s_0) )\over d((\bz,t),(\bw_0, s_0))} \bigg) ^{1\over4} .
\end{align*}

\color{black}
From \eqref{dist} we can see that $d((\bz, t),(\bw_0, s_0) )\lesssim\rho^{2k+2}((\bz, t),(\bw_0, s_0) )$. On the one hand, if 
$d((\bz, t),(\bw_0, s_0) )\ll\rho^{2k+2}((\bz, t),(\bw_0, s_0) )$, then $h((\bz, t),(\bw_0, s_0) )\ll\rho^{2k}((\bz, t),(\bw_0, s_0) )$. Therefore, 
$$|z-w_0|\ll\rho((\bz, t),(\bw_0, s_0) ),\quad 
|\sigma((\bz, t),(\bw_0, s_0) )|\ll\rho^{2k}((\bz, t),(\bw_0, s_0) ),$$
which imply that
$$\rho((\bz, t),(\bw_0, s_0) )\approx|w_0|\leq \rho((\bw_1, s_1),(\bw_0, s_0) ).$$
Combing with \eqref{d10}, in this case, we have 
\begin{align*}
    &\bigg|\Big \langle \nabla_{ \bw', \overline{\bw'},s'} \widetilde{S}(\bz,t;\bw'_{\nu},s'_{\nu}), \ (\bw'_1-\bw'_0, \overline{\bw'_1}- \overline{\bw'_0}, s'_1-s'_0 )  \Big \rangle \bigg|\\
     &\lesssim{1\over d((\bz,t),(\bw_0, s_0)) }  \bigg(  { \rho((\bz,t),(\bw_0, s_0))\over  \rho( (\bw_1,s_1),(\bw_0,s_0) ) }  \bigg)^{k-1\over2}
      \bigg({d( (\bw_1,s_1),(\bw_0,s_0) ) \over d((\bz,t),(\bw_0, s_0)) } \bigg)^{1\over4} \\[7pt]
     & \lesssim{1\over d((\bz,t),(\bw_0, s_0)) }       \bigg({d( (\bw_1,s_1),(\bw_0,s_0) ) \over d((\bz,t),(\bw_0, s_0)) } \bigg)^{1\over4} \\[7pt]
     &\lesssim{1\over d((\bz,t),(\bw_0, s_0)) }  \bigg(  { d( (\bw_1,s_1),(\bw_0,s_0) ) \over d((\bz,t),(\bw_0, s_0)) } \bigg)^{1\over 2k+2}. \\[7pt]
\end{align*}

On the other hand, if $d((\bz, t),(\bw_0, s_0) )\approx\rho^{2k+2}((\bz, t),(\bw_0, s_0) )$,  then by \eqref{est33}, we have
\begin{align*}
     &\bigg|\Big \langle \nabla_{ \bw', \overline{\bw'},s'} \widetilde{S}(\bz,t;\bw'_{\nu},s'_{\nu}), \ (\bw'_1-\bw'_0, \overline{\bw'_1}- \overline{\bw'_0}, s'_1-s'_0 )  \Big \rangle \bigg|\\
     &\lesssim{1\over d((\bz,t),(\bw_0, s_0)) }  \bigg(  { \rho((\bz,t),(\bw_0, s_0))\over  \rho( (\bw_1,s_1),(\bw_0,s_0) ) }  \bigg)^{k-1\over2}
      \bigg({d( (\bw_1,s_1),(\bw_0,s_0) ) \over d((\bz,t),(\bw_0, s_0)) } \bigg)^{1\over4} \\[7pt]
      &={ 1\over d((\bz,t),(\bw_0, s_0)) }   \bigg(  {  d^{1\over 2k+2}( (\bw_1,s_1),(\bw_0,s_0) )\over  \rho( (\bw_1,s_1),(\bw_0,s_0) ) }  \bigg)^{k-1\over2}
      \bigg(  { \rho((\bz,t),(\bw_0, s_0))\over d^{1\over 2k+2}( (\bz,t),(\bw_0,s_0) ) }  \bigg)^{k-1\over2}\times\\
      &\quad\times\bigg(  { d( (\bw_1,s_1),(\bw_0,s_0) ) \over d((\bz,t),(\bw_0, s_0)) } \bigg)^{1\over 2k+2}\\
         &\leq { 1\over d((\bz,t),(\bw_0, s_0)) }  \bigg(  { d( (\bw_1,s_1),(\bw_0,s_0) ) \over d((\bz,t),(\bw_0, s_0)) } \bigg)^{1\over 2k+2}. \\[7pt]
\end{align*}

\color{black}
Consequently, for 
$$ d((\bw_1, s_1),  (\bw_0, s_0) )\leq  c d((\bz,t),(\bw_0, s_0)),$$
with some small $c$, we have
\begin{align*} 
&|S(\bz,t;\bw_1, s_1)-S(\bz,t;\bw_0, s_0)|\\[7pt]
&=\left|\widetilde{S}(\bz,t;\bw'_1, s'_1) -\widetilde{S}(\bz,t;\bw'_0, s'_0)\right|\\
&= \Bigg| \int_0^1\Big \langle \nabla_{ \bw', \overline{\bw'},s'} \widetilde{S}(\bz,t;\bw'_{\nu},s'_{\nu}), \ (\bw'_1-\bw'_0, \overline{\bw'_1}- \overline{\bw'_0}, s'_1-s'_0 )  \Big \rangle d\nu \Bigg|\\[7pt] 
&\lesssim \int_0^1 { 1\over d((\bz,t),(\bw_0, s_0)) } \ \bigg({d( (\bw_1,s_1),(\bw_0,s_0) )\over d((\bz,t),(\bw_0, s_0))} \bigg) ^{1\over2k+2}   d\nu\\[7pt]
 &\lesssim  { 1\over d((\bz,t),(\bw_0, s_0)) } \ \bigg({d( (\bw_1,s_1),(\bw_0,s_0) )\over d((\bz,t),(\bw_0, s_0))} \bigg) ^{1\over2k+2}.   \\[7pt]
 \end{align*}

This finishes the proof of Theorem \ref{thm 7.1}.
\end{proof}
\bigskip

\end{document}